\documentclass[a4paper,reqno]{amsart}

\usepackage{enumerate}
\usepackage[colorlinks=true]{hyperref}
\hypersetup{urlcolor=blue, citecolor=cyan}
\hypersetup{citecolor=blue}

\numberwithin{equation}{section}

\newtheorem{thm}{Theorem}[section]
\newtheorem{lem}[thm]{Lemma}

\newtheorem{prop}[thm]{Proposition}
\theoremstyle{definition}
\newtheorem{rem}[thm]{Remark}

\newenvironment{notation}{\medskip \noindent{\bf Notation. }}{}

\newcommand\R{{\mathbb R}}
\newcommand\C{{\mathbb C}}
\newcommand\N{{\mathbb N}}

\newcommand\Tma{T_{\mathrm{max}}}

\newcommand\Ens{{\mathcal E}}
\newcommand\Spa{{\mathcal X}}
\newcommand\Spb{{\mathcal Y}}

\newcommand\dist{{\mathrm{d}}}

\newcommand\Srn{{\mathcal S}(\R^N )}

\newcommand\Sma{S_{\mathrm{max}}}
\newcommand\Sig{{\Sigma}}

\newcommand\Inzu{m}
\newcommand\Inzd{k}
\newcommand\Inzt{n}
\newcommand\Inzq{\ell}
\newcommand\Inzc{j}

\newcommand\Intd{k}

\newcommand\Intq{\ell}
\newcommand\Intc{j}
\newcommand\Ints{\mu }

\newcommand\Inth{\nu }

\newcommand\Imtq{{\ell}}

\newcommand\Imqu{{m}}
\newcommand\Imqd{{n}}
\newcommand\Imqt{{k}}
\newcommand\Imqq{{\ell}}
\newcommand\Imqc{{j}}
\newcommand\Imqs{{\mu}}
\newcommand\Imqp{{\nu}}
\newcommand\Imqh{{J}}

\newcommand\Loc{{\mathrm{loc}}}

\newcommand\goto{\mathop{\longrightarrow}}

\newcommand\MScN[1]{\href{http://www.ams.org/mathscinet-getitem?mr=#1}{\nolinkurl{(#1)}}}
\newcommand\DOI[1]{\href{http://dx.doi.org/#1}{(doi: \nolinkurl{#1})}}
\newcommand\LINK[1]{\href{#1}{(link: \nolinkurl{#1})}}

\newcommand\DI{u_0 }
\newcommand\DIb{v_0 }

\begin{document}

\title{Local existence, global existence, and scattering for the nonlinear Schr\"o\-din\-ger equation}

\def\shorttitle{nonlinear Schr\"o\-din\-ger equation}

\author[T. Cazenave]{Thierry Cazenave}
\address{Universit\'e Pierre et Marie Curie \& CNRS, Laboratoire Jacques-Louis Lions,
B.C. 187, 4 place Jussieu, 75252 Paris Cedex 05, France}
\email{\href{mailto:thierry.cazenave@upmc.fr}{thierry.cazenave@upmc.fr}}

\author[I. Naumkin]{Ivan Naumkin}
\address{Universit\'e Pierre et Marie Curie, Laboratoire Jacques-Louis Lions,
B.C. 187, 4 place Jussieu, 75252 Paris Cedex 05, France}
\email{\href{mailto:ivan.naumkin@upmc.fr}{ivan.naumkin@upmc.fr}}

\subjclass[2010] {Primary 35Q55; secondary 35A01, 35B33, 35B40, 35B45}

\keywords{Nonlinear Schr\"o\-din\-ger equation, local existence, global existence, scattering}

\begin{abstract}

In this paper, we construct for every $\alpha >0$ and $\lambda \in \C$ a space of initial values for which there exists a local solution of
the nonlinear Schr\"o\-din\-ger equation
\begin{equation*} 
\begin{cases} 
iu_t + \Delta u + \lambda  |u|^\alpha u= 0 \\  u(0,x) =   \DI
\end{cases} 
\end{equation*} 
on $\R^N $. Moreover,  we construct for every $\alpha >\frac {2} {N}$ a class of (arbitrarily large) initial values for which there exists a global solution that scatters as $t\to \infty $.

\end{abstract}

\maketitle

\section{Introduction} \label{sIntro} 

In this article, we study the existence of local and global solutions of the nonlinear Schr\"o\-din\-ger equation 
\begin{equation} \label{NLS1} 
\begin{cases} 
iu_t + \Delta u + \lambda  |u|^\alpha u= 0 \\  u(0,x) =   \DI
\end{cases} 
\end{equation} 
on $\R^N $, where $\alpha >0$ and $\lambda \in \C$, or its  equivalent formulation
\begin{equation} \label{NLS1:i} 
u(t) = e^{it\Delta } \DI + i \lambda \int _0^t e^{i (t-s) \Delta } |u(s)|^\alpha u(s)\, ds
\end{equation} 
where $(e^{it\Delta }) _{ t\in \R }$ is the Schr\"o\-din\-ger group. 

Concerning the local theory, the relevant space in which to study the Cauchy problem appears to be the Sobolev space $H^s (\R^N ) $. Local well-posedness is well-known in $L^2$ if $\alpha <\frac {4} {N}$ (see~\cite{Tsutsumi1}), in $H^1$ is $\alpha <\frac {4} {N-2}$ (see~\cite{GinibreV1}), and in $H^2$ if $\alpha <\frac {4} {N-4}$ (see~\cite{Kato1}). 
More generally, the problem is locally well-posed in $H^s$ if $0\le s< \frac {N} {2}$ and $\alpha <\frac {4} {N-2s}$, but under the additional condition $\alpha > [s]$ if $s> 1$ and $\alpha $ is not an even integer. 
(Here,  $[s]$ the integer part of $s$)
This condition appears because the map $u\mapsto  |u|^\alpha u$ is $C^\infty $ if $\alpha $ is an even integer; it is $C^{\alpha }$, but not $C^{\alpha +1}$ if $\alpha $ is an odd integer; and it is $C^{[\alpha ]+1}$ but not $C^{[\alpha ]+2}$ if $\alpha $ is not an integer. 
It appears naturally. Indeed, solutions are constructed by a fixed-point argument, for which one is lead to estimate derivatives of order up to $s$ of $ |u|^\alpha u$. 
When $s>\frac {N} {2}$ is an integer, local existence in $H^s$ is proved in~\cite{GinibreV2} under the same assumption: $\alpha > [s]$ if $\alpha $ is not an even integer. 
This condition was improved in certain cases, see~\cite{Pecher, FangH}, but not eliminated except for $s\le 2$.
For instance, it seems that no available local theory applies to the case $N=12$ and $\alpha =1$, and that
 there is no $\DI\not = 0$ for which the existence of a local solution (in some sense) of~\eqref{NLS1} is known\footnote{This observation is for the case of a general $\lambda $. If $\lambda \in \R$ and $\lambda <0$, then the existence of a (global) weak solution for $\DI \in H^1 (\R^N ) \cap L^{\alpha +2} (\R^N ) $ follows from compactness arguments, see~\cite{Strauss2, Strauss3}.}. There is some evidence that such a regularity assumption is not purely technical, see~\cite{CazenaveDW-Reg}. 

A regularity condition also appears for the low-energy scattering problem. It is a natural conjecture that if $\alpha >\frac {2} {N}$, then small initial values (in an appropriate sense) give rise to global solutions of~\eqref{NLS1} that are asymptotically free,  i.e. behave like a solution of the linear equation  as $t\to \infty $. This property is known in dimension $N=1,2,3$, see~\cite{Strauss1, CWrapdec, GinibreOV, NakanishiO}.
However, in larger dimension, the available methods leave a gap. This gap is not only due to the limitations discussed above, but also concerns values of $\alpha $ close to $\frac {2} {N}$, for which local existence is not an issue. The difficulty that arises clearly appears by using the pseudo-conformal transformation through which, a global, asymptotically free solution of~\eqref{NLS1} corresponds (see Section~\ref{sNLS}) to a solution of the nonautonomous equation 
\begin{equation} \label{NLS2} 
\begin{cases} 
iv_t + \Delta v + \lambda (1- b t)^{-\frac {4 - N\alpha } {2}} |v|^\alpha v= 0 \\  v(0,x) =  \DIb (x)
\end{cases} 
\end{equation} 
which has a limit (in an appropriate space) as $t\to \frac {1} {b}$. (Here, $b>0$ is a constant.)
The problem is then to solve~\eqref{NLS2} on $[0,\frac {1} {b})$. Note that the assumption $\alpha >\frac {2} {N}$ implies that $(1- b t)^{-\frac {4 - N\alpha } {2}}$ is integrable at $\frac {1} {b}$. 
However, the singularity at $\frac {1} {b}$ makes it problematic to apply Strichartz's estimates when $\alpha $ is close to $\frac {2} {N}$, see~\cite{CWrapdec, CazenaveCDW-Fuj}.
One can try another approach, and use the integrability of $(1- b t)^{-\frac {4 - N\alpha } {2}}$ by estimating the  $L^\infty $-norm of the solution, rather than applying Strichartz's estimates. However, the only way of controlling the $L^\infty $-norm seems to be by a control of the $H^s$-norm, for $s>\frac {N} {2}$, via Sobolev's inequality. 
When $N$ is large, we again face the problem of lack of regularity of the nonlinearity.

In this paper, we construct for every $\alpha >0$ a class of initial values for which there exists a local solution of~\eqref{NLS1}. Moreover,  we construct for every $\alpha >\frac {2} {N}$ a class of initial values for which there exists a global solution of~\eqref{NLS1} that scatters.
Before stating our results, we introduce some notation. 
We fix $\alpha >0$, we consider three integers $\Imqt ,\Imqu , \Imqd $ such that
\begin{equation} \label{fDInt1} 
\Imqt > \frac {N} {2}, \quad \Imqd > \max \Bigl\{ \frac {N} {2} +1,  \frac {N} {2\alpha } \Bigr\}, \quad  2 \Imqu \ge \Imqt + \Imqd +1
\end{equation} 
and we let
\begin{equation} \label{fSpa1b1} 
\Imqh = 2\Imqu +2 + \Imqt+ \Imqd .
\end{equation} 
We define the space $\Spa$ by
\begin{equation} \label{fSpa1} 
\begin{split} 
\Spa=  \{ u\in H^\Imqh  (\R^N ); & \, \langle x\rangle ^\Imqd D^\beta u \in L^\infty  (\R^N )  \text{ for  }  0\le  |\beta |\le 2\Imqu  \\   \langle x\rangle ^\Imqd D^\beta u  \in L^2 & (\R^N )  \text{ for  }   2\Imqu +1 \le  |\beta | \le 2\Imqu +2 + \Imqt,  \\  \langle x\rangle ^{\Imqh -  |\beta |} D^\beta u & \in L^2  (\R^N )  \text{ for  }    2\Imqu +2 + \Imqt <  |\beta | \le J \}
\end{split} 
\end{equation} 
and we equip $\Spa$ with the norm
\begin{equation} \label{fSpa2}
 \| u \|_\Spa =  \sum_{ \Imqc =0 }^{2\Imqu  }   \sup _{  |\beta |=    \Imqc }  \| \langle x\rangle ^\Imqd D^\beta  u  \| _{ L^\infty  } +   \sum_{\Imqp =0 }^{\Imqt +1} \sum_{ \Imqs =0 }^\Imqd \sum_{  |\beta  |=\Imqp + \Imqs  +  2 \Imqu  +1  }   \| \langle x \rangle ^{\Imqd -\Imqs } D^\beta  u \| _{ L^2 } 
\end{equation} 
where
\begin{equation}  \label{fSob2} 
\langle x \rangle = (1 +  |x|^2 )^{\frac {1} {2}}.
\end{equation}  

\begin{rem} \label{eRem1} 
Here are some comments on the space $\Spa$ defined by~\eqref{fSpa1}-\eqref{fSpa2}.  
\begin{enumerate}[{\rm (i)}] 

\item \label{eRem1:1} It follows from standard considerations that $(\Spa,  \| \cdot  \|_\Spa )$ is a Banach space.

\item \label{eRem1:2} Note that $ 2\Imqd > N+2$ by~\eqref{fDInt1}, so that $ \| \langle x\rangle w \| _{ L^2 } \le C  \| \langle x\rangle ^\Imqd  w  \| _{ L^\infty  } $. It easily follows that
\begin{equation} \label{fSpa1:b2} 
\Spa \hookrightarrow H^\Imqh (\R^N ) \cap \Sig
\end{equation} 
where
\begin{equation}  \label{fSpa1:b3} 
\Sig = H^1 (\R^N ) \cap L^2 (\R^N ,  |x|^2 dx) .
\end{equation}

\item \label{eRem1:3} It is immediate that $\Srn \subset \Spa$. 
Furthermore, it is not difficult to show that $\langle x\rangle ^{- p} \in \Spa$ if $p\ge \Imqd$
(apply~\eqref{fSob3}).
\end{enumerate}   
\end{rem} 

Our main results are the following.

\begin{thm} \label{eThm1b1} 
Let $\alpha >0$ and $\lambda \in \C$. Assume~\eqref{fDInt1}-\eqref{fSpa1b1} and let $\Spa$ be defined by~\eqref{fSpa1}-\eqref{fSpa2}.  
If  $\DI \in \Spa$  satisfies
\begin{equation} \label{eThm1:1} 
\inf  _{ x\in \R^N  }  \langle  x \rangle ^\Imqd  |\DI  (x) | >0 
\end{equation} 
then there exist $T>0$ and a unique solution $u\in C([-T,T], \Spa )$  of~\eqref{NLS1:i}.  
\end{thm}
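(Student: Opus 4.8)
The plan is to solve \eqref{NLS1:i} by a contraction argument. Write $\Phi(u)(t)=e^{it\Delta}\DI+i\lambda\int_0^t e^{i(t-s)\Delta}|u(s)|^\alpha u(s)\,ds$ and seek a fixed point of $\Phi$. The essential observation is that the map $z\mapsto|z|^\alpha z$ is $C^\infty$ away from the origin, so the lower bound \eqref{eThm1:1} --- which propagates for a short time --- lets us differentiate the nonlinearity arbitrarily often, exactly as many times as the high-regularity norm \eqref{fSpa2} demands, without ever meeting the singularity at $z=0$. Accordingly I would not work in a ball of $C([-T,T],\Spa)$ alone but in
\[
\Ens=\Bigl\{u\in C([-T,T],\Spa):\ \sup_{|t|\le T}\|u(t)\|_\Spa\le M,\ \inf_{|t|\le T,\,x\in\R^N}\langle x\rangle^n|u(t,x)|\ge\tfrac{\delta}{2}\Bigr\},
\]
where $\delta=\inf_x\langle x\rangle^n|\DI(x)|>0$ and $M$ is a fixed multiple of $\|\DI\|_\Spa$. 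Following Kato, I would equip $\Ens$ with the weaker metric $d(u,v)=\sup_{|t|\le T}\|u(t)-v(t)\|_{L^2}$, for which $\Ens$ is complete (the uniform $\Spa$-bound supplies enough compactness to pass both the bound and the lower bound to $L^2$-limits), prove that $\Phi$ is a contraction for $d$, and prove separately that $\Phi(\Ens)\subset\Ens$ in the strong norm.

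The first analytic ingredient is a linear estimate: $e^{it\Delta}$ maps $\Spa$ into itself with a bound uniform for $t$ in a bounded interval, and $t\mapsto e^{it\Delta}\DI$ is continuous into $\Spa$. This is where the tiered definition of $\Spa$ and the relations \eqref{fDInt1}--\eqref{fSpa1b1} are used. The group does not commute with the weight, but commuting a power of $x$ through $e^{it\Delta}$ turns it into the operator $x+2it\nabla$; iterating, $\langle x\rangle^n D^\beta e^{it\Delta}\DI$ is controlled by $e^{it\Delta}$ applied to combinations of $x^a\nabla^b D^\beta\DI$ with $a\le n$. One then estimates the $L^\infty$-tier by Sobolev's embedding $H^k\hookrightarrow L^\infty$ (valid since $k>N/2$) together with the unitarity of $e^{it\Delta}$ on $H^k$; this is precisely why $\Spa$ must control weighted derivatives up to the total order $J=2m+2+k+n$, and the weighted-$L^2$ tier of \eqref{fSpa2}, which surrenders one power of the weight for each extra derivative, is exactly what absorbs the highest-order terms $x^a\nabla^b$. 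Strong continuity follows by writing $e^{it\Delta}\DI-\DI=i\int_0^t e^{is\Delta}\Delta\DI\,ds$ and bounding the integrand in $\Spa$, the two spare derivatives in $J=2m+2+k+n$ accommodating the Laplacian.

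The second ingredient is the nonlinear estimate, controlling $\||u|^\alpha u\|_\Spa$ by means of the two-sided bound $\tfrac{\delta}{2}\langle x\rangle^{-n}\le|u|\le M\langle x\rangle^{-n}$. By the Fa\`a di Bruno formula a derivative $D^\beta(|u|^\alpha u)$ is a sum of terms of the shape $F^{(p)}(u)\prod_i D^{\gamma_i}u$ with $\sum_i|\gamma_i|=|\beta|$ and $|F^{(p)}(u)|\le C|u|^{\alpha+1-p}$; when $p>\alpha+1$ the exponent is negative and the lower bound on $|u|$ is invoked, the upper bound disposing of the remaining factors, so that altogether $\langle x\rangle^n D^\beta(|u|^\alpha u)\le C\langle x\rangle^{-n\alpha}$. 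The hypothesis $n>N/(2\alpha)$ in \eqref{fDInt1} gives $\langle x\rangle^{-n\alpha}\in L^2(\R^N)$, which is exactly what renders the weighted-$L^2$ part of the $\Spa$-norm of the nonlinearity finite. Since the Duhamel term carries a factor $\int_0^t ds$, its $\Spa$-norm is $O(T)$; with the linear estimate this yields $\Phi(\Ens)\subset\Ens$ for $T$ small, including preservation of the lower bound, because $\langle x\rangle^n(e^{it\Delta}\DI-\DI)$ and the Duhamel term are both $O(T)$ in weighted $L^\infty$ and so cannot drive $\langle x\rangle^n|\Phi(u)|$ below $\tfrac{\delta}{2}$. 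For the contraction I would estimate $|u|^\alpha u-|v|^\alpha v$ in $L^2$ only: as $F(z)=|z|^\alpha z$ is $C^1$ with $DF(0)=0$, it is Lipschitz on bounded sets for every $\alpha>0$, so $\|F(u)-F(v)\|_{L^2}\le C\,d(u,v)$ on $\Ens$, and the factor $O(T)$ makes $\Phi$ a contraction.

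The main obstacle is the linear estimate --- reconciling the dispersive group with the anisotropic weighted norm. One cannot use the dispersive $L^1\to L^\infty$ decay, which is singular as $t\to0$, so all $L^\infty$ control must be routed through Sobolev embedding; this forces the large exponent $J$ and the precise weight-for-derivative trade-off built into \eqref{fSpa1}--\eqref{fSpa2}, and the technical heart of the proof is checking that the commutators generated by $x+2it\nabla$ remain within this budget, uniformly on $[-T,T]$. By comparison the nonlinear estimate is essentially algebraic, once \eqref{eThm1:1} is in hand to tame the limited smoothness of $|u|^\alpha u$.
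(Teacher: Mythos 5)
Your overall architecture (contraction in a set $\Ens$ that encodes both an $\Spa$-bound and the pointwise lower bound, a weighted linear estimate, a Fa\`a di Bruno nonlinear estimate exploiting $\Imqd\alpha>N/2$, $O(T)$ smallness preserving the lower bound) matches the paper, but your linear estimate has a genuine gap precisely where you declare the method: ``all $L^\infty$ control must be routed through Sobolev embedding.'' For the low-order tier $|\beta|\le 2\Imqu$, estimating $\|\langle x\rangle^{\Imqd}D^\beta e^{it\Delta}\psi\|_{L^\infty}$ by $H^{\Imqt}\hookrightarrow L^\infty$ plus the commutation $x\,e^{it\Delta}=e^{it\Delta}(x+2it\nabla)$ requires, at $t$ small, norms of the form $\|\langle x\rangle^{\Imqd}D^\gamma\psi\|_{L^2}$ with $|\gamma|$ small (the worst terms are those where no weight factor is converted into a derivative), and these are \emph{not finite} for general $\psi\in\Spa$: the first tier of \eqref{fSpa1} is $L^\infty$-only. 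For the model datum $\psi=\langle x\rangle^{-\Imqd}$ one has $\langle x\rangle^{\Imqd}\psi\equiv 1\notin L^2(\R^N)$ and $\langle x\rangle^{\Imqd}\Delta\psi\sim\langle x\rangle^{-2}\notin L^2(\R^N)$ for $N\ge 4$ --- exactly the obstruction the paper points out in the introduction. The paper's missing ingredient relative to your plan is Lemma~\ref{eLE1}: Taylor's formula in time for $v=e^{it\Delta}\psi$ (using $\partial_t^{\Imqc}v=i^{\Imqc}\Delta^{\Imqc}v$), which bounds the weighted $L^\infty$ norms at orders $\le 2\Imqu$ by the data's weighted $L^\infty$ tier plus $t$ times the weighted $L^\infty$ norm at order exactly $2\Imqu+2$; only that top term is then fed into Sobolev embedding and the weighted-$L^2$ estimates. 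The same defect infects your continuity argument: you propose to bound the integrand in $e^{it\Delta}\DI-\DI=i\int_0^t e^{is\Delta}\Delta\DI\,ds$ ``in $\Spa$,'' but $\Delta$ does not map $\Spa$ into $\Spa$ (its low-order tier would require weighted $L^\infty$ control of derivatives of $\DI$ up to order $2\Imqu+2$, and its top tier derivatives of order $\Imqh+2$); the paper instead derives \eqref{eLE3:12} from the already-established estimates, splitting according to whether $|\beta|+2\le 2\Imqu$.

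Two further remarks. First, your Galilean-operator derivation of the weighted $L^2$ tier is a legitimate alternative to the paper's energy-method Lemma~\ref{eLE3:0}: converting each weight factor into $2it\nabla$ implements the same weight-for-derivative bookkeeping that the paper obtains by integration by parts, and is consistent with the budget $\Imqh=2\Imqu+2+\Imqt+\Imqd$. Second, your Kato two-norm device (contraction only in the $d(u,v)=\sup_t\|u-v\|_{L^2}$ metric, using that $z\mapsto|z|^\alpha z$ is Lipschitz on bounded sets for every $\alpha>0$) would, once the linear estimate is repaired, genuinely simplify the paper's scheme: it dispenses with the full $\Spa$-difference estimate \eqref{eNL1:2b2}, which is the most laborious part of Proposition~\ref{eNL1} and which the paper needs because it contracts in the $\Spa$ metric. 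But as stated your $\Ens$ is not complete for $d$: an $L^2$-Cauchy sequence bounded in $\Spa$ has a limit that is bounded in $\Spa$ (weak-$*$ limits and a.e.\ convergence also preserve the lower bound, as you say), yet strong $\Spa$-continuity in $t$ is lost in the limit. The standard repair is to define $\Ens$ with $L^\infty((-T,T),\Spa)$ bounds together with $C([-T,T],L^2)$, and to recover $u\in C([-T,T],\Spa)$ for the fixed point a posteriori from $u=\Phi(u)$ and the linear estimates.
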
 

\begin{thm} \label{eThm2} 
Let $\alpha >\frac {2} {N}$ and $\lambda \in \C$. Assume~\eqref{fDInt1}-\eqref{fSpa1b1}, let $\Spa$ be defined by~\eqref{fSpa1}-\eqref{fSpa2} and $\Sig$ by~\eqref{fSpa1:b3}.  
Suppose $\DI = e^{i\frac {b |x|^2} {4}} \DIb $, where $b\in \R$ and $\DIb \in \Spa$ satisfies~\eqref{eThm1:1}. 
If $b>0$ is sufficiently large, then there exists a unique, global solution $u\in C([0,\infty ), \Sig ) \cap L^\infty ((0,\infty ) \times \R^N )$  of~\eqref{NLS1:i}.  Moreover $u$ scatters, i.e. there exists $u^+\in \Sig$ such that $e^{-it\Delta } u(t) \to u^+$ in $\Sig$ as $t\to \infty $. In addition, $\sup  _{ t\ge 0 } (1+t)^{\frac {N} {2}}  \| u(t) \| _{ L^\infty  }<\infty $.
\end{thm}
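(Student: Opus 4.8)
The plan is to reduce Theorem \ref{eThm2} to the local theory of Theorem \ref{eThm1b1} by means of the pseudo-conformal transformation alluded to in the introduction. Given $\DIb \in \Spa$ satisfying \eqref{eThm1:1}, I would first solve the nonautonomous equation \eqref{NLS2} with $v(0)=\DIb$ on $[0,\frac{1}{b})$, and then define $u$ on $[0,\infty)$ by
\[
u(t,x) = (1+bt)^{-\frac{N}{2}}\, e^{\frac{ib|x|^2}{4(1+bt)}}\, v\Bigl(\tfrac{t}{1+bt}, \tfrac{x}{1+bt}\Bigr), \qquad s \Eqdef \frac{t}{1+bt} \in \bigl[0,\tfrac{1}{b}\bigr).
\]
A direct computation shows that $u$ solves \eqref{NLS1:i} with $u(0)=e^{\frac{ib|x|^2}{4}}\DIb=\DI$: multiplying \eqref{NLS1} by $(1+bt)^{\frac{N}{2}+2}e^{-i\frac{b|x|^2}{4(1+bt)}}$ turns the linear part into $iv_s+\Delta v$ and the nonlinearity into $\lambda(1+bt)^{2-\frac{N\alpha}{2}}|v|^\alpha v$, and since $1+bt=(1-bs)^{-1}$ the coefficient is exactly $(1-bs)^{-\frac{4-N\alpha}{2}}$, as in \eqref{NLS2}. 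The time change $s=t/(1+bt)$ maps $[0,\infty)$ onto $[0,\frac{1}{b})$, so the global behaviour of $u$ (and its scattering) is encoded in the behaviour of $v$ as $s\to\frac{1}{b}$; the hypothesis $\alpha>\frac{2}{N}$ is precisely the condition $\frac{4-N\alpha}{2}<1$ under which $(1-bs)^{-\frac{4-N\alpha}{2}}$ is integrable on $[0,\frac{1}{b})$.

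The core of the argument, and the step I expect to be the main obstacle, is to solve \eqref{NLS2} globally on $[0,\frac{1}{b})$ in $C([0,\frac{1}{b}),\Spa)$ while preserving both the $\Spa$-bound and the lower bound \eqref{eThm1:1}, in spite of the blow-up of the coefficient at $s=\frac{1}{b}$. I would first invoke the local theory underlying Theorem \ref{eThm1b1}, adapted to the nonautonomous nonlinearity $(1-bs)^{-\frac{4-N\alpha}{2}}|v|^\alpha v$ (whose coefficient is smooth and bounded on each $[0,\frac{1}{b}-\varepsilon]$), to obtain a maximal solution; as in Theorem \ref{eThm1b1}, the bound \eqref{eThm1:1} keeps $v(s)$ away from $0$ in the weighted sense, so that $z\mapsto|z|^\alpha z$ is effectively $C^\infty$ along the solution and the high-order weighted estimates defining $\| \cdot \|_\Spa$ close. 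The decisive observation is that
\[
\int_0^{1/b}(1-bs)^{-\frac{4-N\alpha}{2}}\,ds = \frac{1}{b}\int_0^1(1-\tau)^{-\frac{4-N\alpha}{2}}\,d\tau = \frac{2}{b\,(N\alpha-2)},
\]
so that, although the coefficient is singular at $\frac{1}{b}$, its total mass over $[0,\frac{1}{b})$ is $O(\frac{1}{b})$, hence arbitrarily small for $b$ large. This smallness is what lets me run an a priori estimate (equivalently a contraction in $C([0,\frac{1}{b}),\Spa)$) in which the Duhamel term is controlled by $\int_0^{s}(1-b\sigma)^{-\frac{4-N\alpha}{2}} \| v(\sigma) \|_\Spa^{\alpha+1}\,d\sigma$ times the norm of $e^{i(s-\sigma)\Delta}$ on $\Spa$: for $b$ large the solution reaches $\frac{1}{b}$, its $\Spa$-norm stays within a fixed multiple of $\| \DIb \|_\Spa$, and \eqref{eThm1:1} is propagated.

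Once $v$ is controlled on $[0,\frac{1}{b})$, the same integrability shows the Duhamel integral converges as $s\to\frac{1}{b}$, so $v(s)$ has a limit $v^\ast$ in $\Sig$ (using \eqref{fSpa1:b2} to place $\Spa$ in $\Sig$). Transferring back through the transformation gives $u\in C([0,\infty),\Sig)$; since $\| u(t) \|_{L^\infty}=(1+bt)^{-\frac{N}{2}}\| v(s) \|_{L^\infty}$ and $\sup_{0\le s<1/b}\| v(s) \|_{L^\infty}<\infty$ (the term $\| \langle x\rangle^{\Imqd} v \|_{L^\infty}$ being part of $\| \cdot \|_\Spa$), one obtains $\sup_{t\ge0}(1+t)^{\frac{N}{2}}\| u(t) \|_{L^\infty}<\infty$, whence $u\in L^\infty((0,\infty)\times\R^N)$. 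Finally, the existence of the endpoint limit $v^\ast$ is, through the transformation, equivalent to $u$ being asymptotically free: the state $u^+\in\Sig$ is the one whose free evolution has pseudo-conformal profile $v^\ast$, and the convergence $e^{-it\Delta}u(t)\to u^+$ in $\Sig$ transfers from $v(s)\to v^\ast$ in $\Sig$ together with the integrability of $(1-bs)^{-\frac{4-N\alpha}{2}}$ controlling the remainder in \eqref{NLS1:i}; uniqueness of $u$ follows from that of $v$. The remaining work is bookkeeping: checking that $e^{it\Delta}$ acts boundedly on the weighted spaces entering $\| \cdot \|_\Spa$ and that the transformation is compatible with the $\Sig$-structure via the operators $x+2it\nabla$, which is routine given the machinery behind Theorem \ref{eThm1b1}.
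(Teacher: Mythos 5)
Your proposal is correct and follows essentially the same route as the paper: Proposition~\ref{eNLS1b1} runs exactly your contraction for the transformed equation~\eqref{fNLS3}, using the same identity $\int_0^{1/b}(1-bs)^{-\frac{4-N\alpha}{2}}\,ds=\frac{2}{(N\alpha-2)b}$ for the large-$b$ smallness, the linear estimates~\eqref{eLE3:11}--\eqref{eLE3:12} to propagate both the $\Spa$-bound and the lower bound~\eqref{eThm1:1}, and then the pseudo-conformal transformation~\eqref{fNLS1:0} to transfer back, with $u^+=e^{i\frac{b|x|^2}{4}}e^{-\frac{i}{b}\Delta}v(\frac1b)$. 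The only cosmetic difference is that the paper contracts directly in $C([0,\frac1b],\Spa)$ on the closed interval, so the endpoint value $v(\frac1b)$ comes for free rather than via your separate limiting argument as $s\to\frac1b$.
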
 

\begin{rem}  \label{eRem2} 
Here are some comments on Theorems~\ref{eThm1b1} and~\ref{eThm2}.
\begin{enumerate}[{\rm (i)}] 

\item \label{eRem2:0} 
We will prove a slightly more general version of Theorem~\ref{eThm1b1}, in which the admissible initial values are not only the functions of $\Spa$ that satisfy~\eqref{eThm1:1}, but also all the functions that are obtained by multiplying them by $e^{i\frac {b |x|^2} {4}}$, where $b$ is any real number. 
This is Theorem~\ref{eThm1} below, from which Theorem~\ref{eThm1b1} follows immediately by choosing $b=0$. 

\item \label{eRem2:1} We note that if $u\in C([a,b], \Sig ) \cap L^\infty ((a,b) \times \R^N )$, then $ |u|^\alpha u \in C([a,b], \Sig )$. Therefore, equation~\eqref{NLS1:i} makes sense in $ \Sig $ for $u$ as in Theorems~\ref{eThm1b1}, \ref{eThm2}, and~\ref{eThm1}.

\item \label{eRem2:2} Note that we have the choice on the parameters $\Imqt ,\Imqu , \Imqd $ as long as they satisfy~\eqref{fDInt1}.  In particular, $\Imqd $ can be any integer satisfying the second condition in~\eqref{fDInt1}.

\item \label{eRem2:3} It follows from Remark~\ref{eRem1}~\eqref{eRem1:3} that Theorem~\ref{eThm2} applies to the initial value $\DI = z e^{i\frac {b |x|^2} {4}}  ( \langle x\rangle ^{- \Imqd} + \psi  )$ where $ \Imqd > \max \{ \frac {N} {2} +1,  \frac {N} {2\alpha } \} $, $\psi \in \Srn$ satisfies $ \|  \langle x\rangle ^{ \Imqd} \psi  \| _{ L^\infty  } <1$, $z\in \C$ and $b>0$  is sufficiently large. Theorem~\ref{eThm1b1} applies if $b=0$, and Theorem~\ref{eThm1} applies if $b$ is any real number.

\item \label{eRem2:4}
The solution constructed given by Theorem~\ref{eThm2} has  stronger regularity properties than stated. See Remark~\ref{eRem3}.

\item \label{eRem2:5}
Note that there are no restrictions on the size of the initial value in Theorems~\ref{eThm1b1} and~\ref{eThm2}. Besides the smoothness and decay imposed by the assumption $\DI \in \Spa$ (or $\DIb \in \Spa$), the only limitation is condition~\eqref{eThm1:1}. Note that if $\DI\in \Spa$ satisfies~\eqref{eThm1:1}, then
\begin{equation*} 
0< \liminf  _{  |x|\to \infty }  |x|^\Imqd  |\DI (x) | \le \limsup  _{  |x|\to \infty }  |x|^\Imqd  |\DI (x) | <\infty .
\end{equation*} 

\item \label{eRem2:6}
The condition $\alpha >\frac {2} {N}$ in Theorem~\ref{eThm2} cannot be replaced by $\alpha >  \underline{\alpha } $ for some $ \underline{\alpha } <\frac {2} {N}$.   Indeed, if $\alpha <\frac {2} {N}$ and $\Im \lambda <0$, then it follows from \cite[Theorem~1.1]{CazenaveCDW-Fuj} that every $H^1$-solution of~\eqref{NLS1} blows up in finite or infinite time. Thus we see that no nontrivial solution of~\eqref{NLS1} can satisfy the conclusion of Theorem~\ref{eThm2}. 
Moreover, if $\alpha <\frac {2} {N}$ and $\Im \lambda \ge 0$, then all $L^2$ solutions are global, but they do not scatter.  (See Strauss~\cite{Strauss4}, Theorem~3.2 and Example~3.3, p.~68. See also \cite{Barab} for the one-dimensional case.) In particular, the case $\alpha =\frac {2} {N}$ is critical. 

\item \label{eRem2:7}
In the range $\alpha _0 < \alpha <\frac {4} {N-2}$, where $\alpha _0$ is the positive zero of $N\alpha ^2 + (N-2) \alpha = 4$, the conclusion of Theorem~\ref{eThm2} (except for the $L^\infty $ decay estimate) follows from~\cite[Corollary~2.5]{CWrapdec}. Note that the assumptions in~\cite[Corollary~2.5]{CWrapdec} concerning $\DIb$ are less restrictive than in  Theorem~\ref{eThm2}, it is only required that $\DIb\in \Sig$.

\item \label{eRem2:8}
Theorem~\ref{eThm2} does not say anything on what happens to the solution $u$ for $t<0$. In fact, one cannot in general expect that the initial values considered in Theorem~\ref{eThm2} give rise to global solutions for negative times. Indeed, suppose $\frac {4} {N}\le \alpha <\frac {4} {N-2}$ (so that the Cauchy problem~\eqref{NLS1:i} is locally well-posed in $\Sig$) and that $\lambda \in \R$, $\lambda >0$. Let $\DIb$ satisfy the assumptions of Theorem~\ref{eThm2} and suppose further that $\frac {1} {2}  \| \nabla \DIb\| _{ L^2 }^2 - \frac {1} {\alpha +2}  \| \DIb\| _{ L^{\alpha +2} }^{\alpha +2} <0$. (This can be achieved by multiplying $\DIb$ by a sufficiently large constant.)
Let $u$ be the corresponding solution of~\eqref{NLS1:i} with $\DI = e^{i \frac {b |x|^2} {4}} \DIb $ defined on the maximal interval $(-\Sma, \Tma)$.  
It follows from Theorem~\ref{eThm2} (or~\cite[Corollary~2.5]{CWrapdec}) that if $b>0$ is sufficiently large, then $\Tma=\infty $ and $u$ scatters. 
On the other hand, it follows from \cite[Remark~2.6]{CWrapdec} that for every $b>0$, $\Sma <\infty $.

\item  \label{eRem2:9}
We can apply Theorem~\ref{eThm2} to construct solutions of~\eqref{NLS1:i} that exist for all $t<0$ and scatter as $t\to -\infty $. Indeed, it suffices to apply Theorem~\ref{eThm2} to equation~\eqref{NLS1:i} with $\lambda $ replaced by $ \overline{\lambda } $. If $\DI$ satisfies the assumptions of Theorem~\ref{eThm2} (for $ \overline{\lambda } $) and $u$ is the corresponding solution, then we see that $v(t) =  \overline{u}  (-t)$ is a solution of~\eqref{NLS1:i} (with $\lambda $) for $t<0$, which scatters as $t\to -\infty $, and with initial value $ \overline{\DI} $. 
Of course, one cannot expect in general that $v$ is global for positive times, since this would mean that $u$ is global for negative times. (See~\eqref{eRem2:8} above.)

\end{enumerate} 
\end{rem} 

Our strategy for proving Theorem~\ref{eThm1b1} is based on the following observation:
Since the possible defect of smoothness of the nonlinearity $ |u|^\alpha u$ is only at $u=0$, there is no obstruction to regularity for a solution that does not vanish. This suggests to look for such solutions. This is not completely trivial, since there is no maximum principle for the Schr\"o\-din\-ger equation, and this is why the various conditions in the definition of the space $\Spa$ arise. 
Indeed, consider for instance $\psi  (x)= \langle x\rangle ^{-\Imqd}$, where $\Imqd >\frac {N} {2}+1$ so that $\psi \in \Sig$, and let $v(t)= e^{it\Delta }\DI$ be the solution of
\begin{equation} \label{Low2} 
\begin{cases} 
iv_t + \Delta v=0 \\
v (0, x)=  \psi (x).
\end{cases} 
\end{equation} 
We want to estimate $\inf  _{ x\in  \R^N }  \langle x\rangle ^\Imqd  | v (t, x) | $
and we note that 
\begin{equation} \label{fInt1} 
v(t,x)= \psi (x) + i\int _0^t \Delta v(s,x) \, ds .
\end{equation} 
Therefore,
\begin{equation*} 
\langle x\rangle ^\Imqd  | v(t,x) | \ge \langle x\rangle ^\Imqd   |\psi  (x)| - \int _0^t \langle x\rangle ^\Imqd  |\Delta v|
\end{equation*} 
so that
\begin{equation}  \label{Low4} 
\inf  _{ x\in \R^N } \langle x\rangle ^\Imqd  | v(t,x) | \ge \inf  _{ x\in \R^N  } \langle x\rangle ^\Imqd   |\psi  (x)| - t  \| \langle x\rangle ^\Imqd  \Delta v \| _{ L^\infty ((0, t  ) \times \R^N ) } .
\end{equation} 
We now must estimate the last  term on the right-hand side of~\eqref{Low4}. Note that we cannot simply use Sobolev's embedding $H^s\hookrightarrow L^\infty $ for $s>\frac {N} {2}$.
Indeed, this would require $\langle x\rangle ^\Imqd  \Delta \psi \in L^2 (\R^N ) $, i.e. $ \langle x\rangle ^{-2 }\in L^2 (\R^N ) $, which fails if $N\ge 4$. On the other hand, we note that  $ |\langle x\rangle ^\Imqd  \Delta^{k+1} \psi | \le C \langle x\rangle ^{-2 k -2}$, which belongs to $L^2 (\R^N ) $ if $k$ is sufficiently large. Therefore, instead of applying~\eqref{fInt1}, we apply Taylor's formula with integral remainder involving derivatives of $v$ of sufficiently large order, and this leads to estimating $\langle x\rangle ^\Imqd  \Delta^{k+1} v(t)$ in the Sobolev space $H^s $ where $s>\frac {N} {2}$ and $k$ is sufficiently large. This first step is achieved in Lemma~\ref{eLE1} below. In order to estimate $ \| \langle x\rangle ^\Imqd  \Delta^{k+1} v(t) \| _{ H^s }$, we use  energy estimates. Every integration by parts will decrease by $1$ the power of $ \langle x\rangle $ which is involved in the estimate, but will at the same time increase by $1$ the number of derivatives. 
This second step is achieved in Lemma~\ref{eLE3:0} below, and this explains why the definition of the space $\Spa$ involves weighted $L^\infty $-norms of the derivatives of the function up to a certain order, then weighted $L^2 $-norms of the derivatives of higher order. 
The combination of Lemmas~\ref{eLE1} and~\ref{eLE3:0} yields Proposition~\ref{eLE3} below, which is the main linear estimate we use in this paper. It shows in particular that for $\psi $ as above,  $\inf  _{ x\in  \R^N }  \langle x\rangle ^\Imqd  | v (t, x) | $ remains positive for all sufficiently small $t$. 
The proof of Theorem~\ref{eThm1} is then a simple contraction mapping argument applied to equation~\eqref{NLS1}. This argument requires, as usual, a linear estimate (Proposition~\ref{eLE3} is our case) and a nonlinear estimate. The nonlinear estimate is provided by Proposition~\ref{eNL1} below, which yields an estimate of $ |u|^\alpha u$ in the space $\Spa$, assuming $u\in \Spa$ satisfies $ |u (x)| \ge c \langle x\rangle ^{- \Imqd }$ for some $c>0$. This justifies the introduction of the space $\Spa$, which is well-suited for both the Schr\"o\-din\-ger group and the nonlinearity. 

The proof of Theorem~\ref{eThm2} requires one more argument, which is inspired from~\cite{CWrapdec}. It consists in applying the pseudo-conformal transformation to equation~\eqref{NLS1}. A global solution of~\eqref{NLS1} which scatters, corresponds to a solution of the non-autonomous equation~\eqref{NLS2} which is defined on $[0, \frac {1} {b}]$. 
If $\alpha >\frac {2} {N}$, then the time-dependent factor in~\eqref{NLS2} is integrable at $\frac {1} {b}$, so that can apply a standard contraction argument (based on Propositions~\ref{eLE3} and~~\ref{eNL1}) to construct a solution on $[0, \frac {1} {b})$, provided $b$ is sufficiently large.

The rest of this paper is organized as follows. In Section~\ref{sLow} we establish Proposition~\ref{eLE3}, which measures the action of the Schr\"o\-din\-ger  group $(e^{it\Delta }) _{ t\in \R }$ on the space $\Spa$. Section~\ref{sNL} is devoted to the nonlinear estimate, i.e. the estimate of $ |u|^\alpha u$ in $\Spa$. The proofs of Theorems~\ref{eThm1} and~\ref{eThm2} are completed in Section~\ref{sNLS}. Finally, we recall in Appendix~\ref{sElem} some elementary estimates which we use in the paper. 

\begin{notation} 
We denote by $L^p (U) $, for $1\le p\le \infty $ and $U=\R^N $ or $U= (0,T)\times \R^N $, $0<t\le \infty $, the usual (complex valued) Lebesgue spaces. 
We use the standard notation that $ \| u \| _{ L^p }=\infty $ if $u\in L^1_\Loc (U )$ and $u\not \in L^p (U) $.
$H^s (\R^N  ) $,  $s\in \R$, is the usual (complex valued) Sobolev space.  (See e.g.~\cite{AdamsF} for the definitions and properties of these spaces.)
We denote by $(e^{it \Delta }) _{ t\in \R }$ the Schr\"o\-din\-ger group on $\R^N $. As is well known, $(e^{it \Delta }) _{ t\in \R }$ is a group of isometries on $L^2 (\R^N ) $, and on $H^s (\R^N ) $ for all $s\in \R$. 

\end{notation} 

\section{Weighted estimates for the linear Schr\"o\-din\-ger equation} \label{sLow} 

Our main result of this section is the following estimate of the action of the Schr\"o\-din\-ger  group $(e^{it\Delta }) _{ t\in \R }$ on the space $\Spa$. 

\begin{prop} \label{eLE3}
Assume~\eqref{fDInt1}-\eqref{fSpa1b1} with $\alpha =1$, and let the space
$ \Spa$ be defined by~\eqref{fSpa1}-\eqref{fSpa2}.
Given $\psi \in \Spa$, it follows that $e^{it \Delta } \psi \in C ( \R , \Spa )$. 
Moreover, there exists a constant $C$ such that
\begin{equation} \label{eLE3:11}
 \| e^{it \Delta }  \psi \|_\Spa  \le C (1+ |t|)^{\Imqu + \Imqd +1}   \| \psi  \|_\Spa 
\end{equation} 
for all $t\in \R$ and all $\psi \in \Spa$. In addition,
\begin{equation} \label{eLE3:12}
\sup   _{ |\beta |\le 2 \Imqu }
 \| \langle x\rangle ^{\Imqd} D^\beta  (e^{it \Delta }  \psi -\psi ) \| _{ L^\infty  } \le C  |t| (1+ |t|)^{\Imqu + \Imqd +1}   \| \psi  \|_\Spa 
\end{equation} 
for all $t \in \R$ and all $\psi \in \Spa$. 
 \end{prop}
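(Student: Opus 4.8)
The plan is to estimate separately the two groups of terms that make up $\|\cdot\|_\Spa$: the weighted $L^\infty$-norms of $D^\beta u$ for $|\beta|\le 2m$, and the weighted $L^2$-norms of the higher derivatives. Write $v(t)=e^{it\Delta}\psi$ and use $\partial_t^j v=(i\Delta)^j v$ to expand $v$ by Taylor's formula in $t$: for any integer $p\ge 1$,
\begin{equation*}
v(t)=\sum_{j=0}^{p-1}\frac{(it)^j}{j!}\Delta^j\psi+\frac{(it)^p}{(p-1)!}\int_0^1(1-\theta)^{p-1}e^{i\theta t\Delta}\Delta^p\psi\,d\theta .
\end{equation*}
The point of this identity is that the full weight $\langle x\rangle^n$ cannot be placed directly on $v(t)$ in $L^2$, so $H^k\hookrightarrow L^\infty$ is not applicable to $\langle x\rangle^n D^\beta v$ itself; it is, however, applicable to sufficiently high Laplacian powers, which is exactly what the remainder supplies. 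This Taylor reduction is the first of the two steps announced in the introduction (Lemma~\ref{eLE1}).

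For a fixed $\beta$ with $|\beta|\le 2m$, I would choose $p$ so that $|\beta|+2p\in\{2m+1,2m+2\}$, whence $p\le m+1$. Then every polynomial term $\langle x\rangle^n D^\beta\Delta^j\psi$ ($0\le j\le p-1$) has order $|\beta|+2j\le 2m$ and is controlled by the $L^\infty$-part of $\|\psi\|_\Spa$, while the remainder is estimated in $L^\infty$ by Sobolev's embedding $H^k\hookrightarrow L^\infty$ (valid since $k>N/2$): distributing $D^\beta$ and the $k$ extra derivatives by Leibniz, and using that differentiating $\langle x\rangle^n$ only lowers the weight, one is led to bound $\|\langle x\rangle^n D^\delta v(\theta t)\|_{L^2}$ for $2m+1\le|\delta|\le|\beta|+2p+k\le 2m+2+k$, which is exactly the range where the full weight $\langle x\rangle^n$ occurs in $\|\cdot\|_\Spa$. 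It remains to estimate these weighted $L^2$-quantities through an energy argument (Lemma~\ref{eLE3:0}). Differentiating $\|\langle x\rangle^a D^\gamma v\|_{L^2}^2$ in $t$ and integrating by parts, the leading term cancels because $e^{it\Delta}$ is an $L^2$-isometry, and the weight derivative $|\nabla\langle x\rangle^{2a}|\le 2a\langle x\rangle^{2a-1}$ leaves, by Cauchy--Schwarz,
\begin{equation*}
\frac{d}{dt}\,\|\langle x\rangle^a D^\gamma v\|_{L^2}\le C\sum_{|\gamma'|=|\gamma|+1}\|\langle x\rangle^{a-1}D^{\gamma'}v\|_{L^2}.
\end{equation*}
Thus one unit of weight is traded for one extra derivative and one integration in $t$. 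Starting from the top of the ladder, where the weight is $0$ and $\|D^\gamma v(t)\|_{L^2}=\|D^\gamma\psi\|_{L^2}$ is conserved and finite (since $\Spa\hookrightarrow H^J$ and $|\gamma|\le J$), an induction on $a=0,1,\dots,n$ yields $\|\langle x\rangle^a D^\gamma v(t)\|_{L^2}\le C(1+|t|)^a\|\psi\|_\Spa$; the order shifts of one at each step match precisely the decreasing-weight part of the definition of $\Spa$, so the induction closes. In particular the weight-$n$ terms grow at most like $(1+|t|)^n$.

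Combining the two steps proves~\eqref{eLE3:11}: the $L^2$-part of $\|v(t)\|_\Spa$ is $O((1+|t|)^n)$, while in the $L^\infty$-part the polynomial terms carry $|t|^j\le(1+|t|)^m$ and the remainder carries $|t|^p(1+|t|)^n\le(1+|t|)^{m+n+1}$, giving the exponent $m+n+1$. For~\eqref{eLE3:12} one applies the same expansion to $v(t)-\psi$, in which the $j=0$ term is absent; every surviving term then carries at least one factor of $t$, and factoring out $|t|$ gives $C|t|(1+|t|)^{m+n+1}\|\psi\|_\Spa$. (The differentiation in $t$ and the integrations by parts are justified rigorously by a standard regularization --- mollifying in space and truncating in frequency so that $\Delta$ becomes bounded --- all commutators being of lower order; this is legitimate because every weighted norm appearing is finite for $\psi\in\Spa$.)

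Finally, for $e^{it\Delta}\psi\in C(\R,\Spa)$ the group property reduces everything to continuity at $t=0$, since $\|e^{it\Delta}\psi-e^{it_0\Delta}\psi\|_\Spa\le C(1+|t_0|)^{m+n+1}\|e^{i(t-t_0)\Delta}\psi-\psi\|_\Spa$ by~\eqref{eLE3:11}. The $L^\infty$-part of $\|e^{i\tau\Delta}\psi-\psi\|_\Spa$ tends to $0$ directly by~\eqref{eLE3:12}, and for the $L^2$-part (with $w=D^\beta\psi$, using $\langle x\rangle^{2a}\le C\sum_{|\sigma|\le a}x^{2\sigma}$) each monomial weight is commuted through via $x_je^{i\tau\Delta}=e^{i\tau\Delta}(x_j+2i\tau\partial_j)$; this trades weight for derivatives, which stay available since $\psi\in H^J$, so continuity follows from the strong $L^2$-continuity of the group applied to terms lying inside $\|\psi\|_\Spa$. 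I expect the main obstacle to be precisely this bookkeeping in the two-step reduction: choosing, for each $\beta$, the Taylor order $p$ so that the polynomial terms fall in the range $|\beta|\le 2m$ and the remainder in the range $2m+1\le|\delta|\le 2m+2+k$, and verifying that the order/weight shifts produced by the energy inequality remain inside the ladder defining $\Spa$ from weight $0$ up to weight $n$. It is exactly this matching that forces the value $J=2m+2+k+n$, the threshold $2m+2+k$, and the final exponent $m+n+1$.
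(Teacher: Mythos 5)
Your proposal is correct and takes essentially the same route as the paper: Taylor's formula in $t$ reduces the weighted $L^\infty$ bounds for $|\beta|\le 2\Imqu$ to derivatives of order about $2\Imqu+2$ (the paper's Lemma~\ref{eLE1}), a weight-for-derivative energy induction gives the $(1+|t|)^{\Imqd}$ growth of the weighted $L^2$ norms (Lemma~\ref{eLE3:0}), and Sobolev's embedding $H^\Imqt\hookrightarrow L^\infty$ plus the group property finish \eqref{eLE3:11}--\eqref{eLE3:12} and continuity, with the same bookkeeping forcing $\Imqh=2\Imqu+2+\Imqt+\Imqd$ and the exponent $\Imqu+\Imqd+1$. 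The only cosmetic deviations are that you handle odd $|\beta|$ by adjusting the parity of the Taylor order where the paper uses the interpolation inequality~\eqref{fSob6}, and you prove $L^2$-continuity at $t=0$ by commuting monomial weights through $e^{i\tau\Delta}$ where the paper instead uses density of $\Srn$ in $\Spb$.
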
 
 
 Before proving Proposition~\ref{eLE3}, we first establish the following weighted $L^\infty $ estimate.

\begin{lem} \label{eLE1} 
Assume~\eqref{fDInt1}-\eqref{fSpa1b1} with $\alpha =1$. There exists a constant $C$ such that
\begin{equation} \label{Low5b1} 
\begin{split} 
 \sum_{ \Imqc =0 }^{2\Imqu  }   \sup _{  |\beta |=    \Imqc }  \| \langle x\rangle ^\Imqd  D^\beta  e^{i s \Delta } &\psi  \| _{ L^\infty ((0,t ) \times  \R^N ) } \le 
C (1+t)^{\Imqu } \sum_{ \Imqc =0 }^{2\Imqu  }   \sup _{  |\beta |=    \Imqc }  \| \langle x\rangle ^\Imqd D^\beta  \psi  \| _{ L^\infty  } \\ &  + C t (1+t)^{\Imqu }\sup _{  |\beta |=   2 \Imqu  +2}  \| \langle x\rangle ^\Imqd  D^\beta e^{i s \Delta }\psi  \| _{ L^\infty ((0, t ) \times \R^N ) }.
\end{split} 
\end{equation} 
for all $t \ge 0$ and all $\psi \in H^\Imqh (\R^N )$.
\end{lem}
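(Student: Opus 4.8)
The plan is to control the weighted $L^\infty$ norms of the derivatives of $v(s)\Eqdef e^{is\Delta}\psi$ by iterating the first-order Duhamel identity coming from $\partial_s v = i\Delta v$, and to deal with a parity obstruction (described below) by interpolation. I first record the regularity needed. Since $\psi\in H^\Imqh(\R^N )$ with $\Imqh=2\Imqu+2+\Imqt+\Imqd$ and $\Imqt+\Imqd>\frac N2$, we have $v\in C(\R,H^\Imqh)\cap C^1(\R,H^{\Imqh-2})$, and by Sobolev embedding $v(s,\cdot)\in C^{2\Imqu+2}(\R^N )$ for every $s$. Consequently, for every multi-index $\beta$ with $|\beta|\le 2\Imqu$ and every $x$, the identity
\[
D^\beta v(s,x)=D^\beta \psi(x)+i\int_0^s \Delta D^\beta v(\sigma,x)\,d\sigma
\]
holds pointwise. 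Setting $A_j(t)\Eqdef\sup_{|\beta|=j}\|\langle x\rangle^\Imqd D^\beta v\|_{L^\infty((0,t)\times\R^N )}$ and $B_j\Eqdef\sup_{|\beta|=j}\|\langle x\rangle^\Imqd D^\beta \psi\|_{L^\infty}$, and using $\Delta D^\beta v=\sum_i D^{\beta+2e_i}v$, this yields the basic recursion
\[
A_j(t)\le B_j+C\,t\,A_{j+2}(t)\qquad(j\ge0,\ t\ge0),
\]
the inequality being trivially true whenever the right-hand side is infinite.

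Next I iterate this recursion along arithmetic progressions of step $2$. For an even order $j=2\ell$ with $0\le 2\ell\le 2\Imqu$, iterating $q=\Imqu+1-\ell$ times reaches $A_{2\Imqu+2}$ and gives
\[
A_{2\ell}(t)\le \sum_{p=0}^{q-1}C^{p}\,t^{p}\,B_{2\ell+2p}+C^{q}\,t^{q}\,A_{2\Imqu+2}(t).
\]
All intermediate orders $2\ell+2p$ remain $\le 2\Imqu$, so the $B$'s appearing are among the data norms on the right-hand side of~\eqref{Low5b1}. Moreover $p\le \Imqu$ gives $t^{p}\le(1+t)^{\Imqu}$, while $1\le q\le \Imqu+1$ gives $t^{q}=t\,t^{q-1}\le t(1+t)^{\Imqu}$. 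Hence every even $A_{2\ell}(t)$ is bounded by $C(1+t)^{\Imqu}\sum_{j=0}^{2\Imqu}B_j+C\,t(1+t)^{\Imqu}A_{2\Imqu+2}(t)$, which is exactly the form required by~\eqref{Low5b1}.

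The main obstacle is the odd orders. Because $\Delta$ preserves the parity of the differentiation order, iterating the recursion from an odd $j$ never reaches the even order $2\Imqu+2$ allowed in~\eqref{Low5b1}, but only $2\Imqu+1$, which is controlled neither by the data nor by the remainder term. I resolve this by interpolation instead of iteration. For an odd order $j=2\ell+1$ with $0\le 2\ell+1\le 2\Imqu$ (so that $2\ell$ and $2\ell+2$ are both $\le 2\Imqu$), an elementary weighted Gagliardo--Nirenberg inequality of the type collected in the Appendix --- where differentiating the weight $\langle x\rangle^\Imqd$ produces only lower-order terms of the same kind --- gives, for each fixed $s$,
\[
\sup_{|\beta|=2\ell+1}\langle x\rangle^\Imqd|D^\beta v(s)|\le C\Bigl(\sup_{|\beta|=2\ell}\langle x\rangle^\Imqd|D^\beta v(s)|\Bigr)^{1/2}\Bigl(\sup_{|\beta|=2\ell+2}\langle x\rangle^\Imqd|D^\beta v(s)|\Bigr)^{1/2}.
\]
Taking the supremum over $s\in(0,t)$ yields $A_{2\ell+1}(t)\le C\,A_{2\ell}(t)^{1/2}A_{2\ell+2}(t)^{1/2}\le C\bigl(A_{2\ell}(t)+A_{2\ell+2}(t)\bigr)$.

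Finally, since $2\ell$ and $2\ell+2$ are even orders $\le 2\Imqu$, the even-order bound of the second step controls them directly in the desired form, so that no absorption argument is needed: the even estimates are already closed, and the odd ones follow from them. Summing the even and odd bounds over $0\le j\le 2\Imqu$ (absorbing the finitely many constants into $C$) then produces precisely~\eqref{Low5b1}. I expect the only delicate points to be the bookkeeping of the powers of $t$ (to land on $(1+t)^{\Imqu}$ and $t(1+t)^{\Imqu}$ rather than a larger power) and the parity argument, which is the genuine conceptual step forcing the interpolation of odd-order derivatives between the two neighbouring even orders.
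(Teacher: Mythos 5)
Your proof is correct and follows essentially the same route as the paper: iterating the first-order Duhamel identity along even orders is exactly Taylor's formula in time with integral remainder (the paper's~\eqref{eAbs1:1}--\eqref{eAbs1:2}, applied with $|\beta|=2\Imtq$), reaching the same remainder order $2\Imqu+2$ and the same powers $(1+t)^{\Imqu}$ and $t(1+t)^{\Imqu}$. Your parity observation and the interpolation of odd orders between the two neighbouring even orders is precisely the paper's use of the appendix estimate~\eqref{fSob6}; note only that~\eqref{fSob6} is the additive form rather than the multiplicative inequality you state, but the additive consequence $A_{2\ell+1}(t)\le C\bigl(A_{2\ell}(t)+A_{2\ell+2}(t)\bigr)$ is all you actually use.
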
 

\begin{proof} 
Set  $v (t) = e^{i t\Delta }\psi $. Since $\Delta ^\Imqc \psi \in H^{\Imqh- 2 \Imqc} (\R^N ) $ for $0\le \Imqc \le \Imqu+1 $, we have $v\in C^{\Imqc} ([0,\infty ), H^{\Imqh- 2 \Imqc}  (\R^N ) )$ and $\frac {d^\Imqc v} {dt^\Imqc}= i^{\Imqc} \Delta ^\Imqc v(t)$ for all $0\le \Imqc\le \Imqu+1 $. 
Given $0\le \Imtq \le \Imqu$, we apply 
 Taylor's formula with integral remainder involving the derivative of order $\Imqu - \Imtq +1$ to the function $v$, and we obtain
\begin{equation} \label{eAbs1:1} 
v(t)  = \sum_{ \Imqc=0 }^{\Imqu - \Imtq}  \frac {(it)^\Imqc} {\Imqc !} \Delta ^\Imqc \psi  + \frac {i ^{\Imqu - \Imtq +1}} {(\Imqu - \Imtq) ! } \int _0^t (t-s)^{\Imqu - \Imtq} \Delta ^{\Imqu - \Imtq +1} v(s) \, ds
\end{equation} 
for all $t\ge 0$. 
Applying now $D^\beta $ with $ |\beta | = 2  \Imtq $, we deduce that 
\begin{equation} \label{eAbs1:2} 
D^\beta v(t)  = \sum_{ \Imqc=0 }^{\Imqu - \Imtq}  \frac {(it)^\Imqc} {\Imqc !} D^\beta  \Delta ^\Imqc \psi  + \frac {i ^{\Imqu - \Imtq +1}} {(\Imqu - \Imtq) ! } \int _0^t (t-s)^{\Imqu - \Imtq} D^\beta  \Delta ^{\Imqu - \Imtq +1} v(s) \, ds .
\end{equation} 
Identity~\eqref{eAbs1:2} holds in $C([0,\infty ), H^{\Imqt} (\R^N ) )$, hence in $C([0,\infty )\times \R^N )$ by Sobolev's embedding $H^\Imqt \subset  C(\R^N )$. Multiplying by $\langle x\rangle ^\Imqd  $ and taking the supremum in $x$, then in $t$, we obtain
\begin{equation} \label{eAbs1:3} 
\begin{split} 
 \sup _{  |\beta |=     2  \Imtq  }  \| \langle x\rangle ^\Imqd  D^\beta  e^{i s \Delta } &\psi  \| _{ L^\infty ((0,t ) \times  \R^N ) } \le 
C (1+t)^{\Imqu } \sum_{ \Imqc =0 }^{2\Imqu  }   \sup _{  |\beta |=    \Imqc }  \| \langle x\rangle ^\Imqd D^\beta  \psi  \| _{ L^\infty  } \\ &  + C t (1+t)^{\Imqu }\sup _{  |\beta |=   2 \Imqu  +2}  \| \langle x\rangle ^\Imqd  D^\beta e^{i s \Delta }\psi  \| _{ L^\infty ((0, t ) \times \R^N ) }.
\end{split} 
\end{equation} 
Since the right-hand side of the above inequality is independent of $0\le \Imtq \le \Imqu$, it follows by summing up in $\Imtq$ that
\begin{equation} \label{eAbs1:4} 
\begin{split} 
\sum_{ \Imqc =0 }^{\Imqu  }  \sup _{  |\beta |=     2  \Imqc  }  \| \langle x\rangle ^\Imqd  D^\beta  e^{i s \Delta } &\psi  \| _{ L^\infty ((0,t ) \times  \R^N ) } \le 
C (1+t)^{\Imqu } \sum_{ \Imqc =0 }^{2\Imqu  }   \sup _{  |\beta |=    \Imqc }  \| \langle x\rangle ^\Imqd D^\beta  \psi  \| _{ L^\infty  } \\ &  + C t (1+t)^{\Imqu }\sup _{  |\beta |=   2 \Imqu  +2}  \| \langle x\rangle ^\Imqd  D^\beta e^{i s \Delta }\psi  \| _{ L^\infty ((0, t ) \times \R^N ) }.
\end{split} 
\end{equation} 
By the interpolation estimate~\eqref{fSob6}, derivatives of odd order in the left-hand side of~\eqref{Low5b1} are estimated by the left-hand side of~\eqref{eAbs1:4}, and we conclude that~\eqref{Low5b1} holds.
\end{proof} 

We now want to estimate the last term in the right-hand side of~\eqref{Low5b1} by Sobolev's embebbing. In order to do this, we establish a weighted $L^2$ estimate (Lemma~\ref{eLE3:0} below), for which we introduce the following notation. Assuming~\eqref{fDInt1}-\eqref{fSpa1b1} with $\alpha =1$, we define the space
\begin{equation} \label{fSpa1:0} 
\begin{split} 
\Spb=  \{ u\in H^\Imqh  (\R^N ); & \,  \langle x\rangle ^\Imqd D^\beta u \in L^2  (\R^N )  \text{ for  }   2\Imqu +1 \le  |\beta | \le 2\Imqu +2 + \Imqt,  \\ & \langle x\rangle ^{\Imqh -  |\beta |} D^\beta u \in L^2  (\R^N )  \text{ for  }    2\Imqu +2 + \Imqt <  |\beta | \le J \} 
\end{split} 
\end{equation} 
and we equip $\Spb$ with the  norm
\begin{equation} \label{fSpa2:0}
 \| u \|_\Spb =  \sum_{ \Imqc =0 }^{2\Imqu  }   \sup _{  |\beta |=    \Imqc }  \|  D^\beta  u  \| _{ L^2  } +   \sum_{\Imqp =0 }^{\Imqt +1} \sum_{ \Imqs =0 }^\Imqd \sum_{  |\beta  |=\Imqp + \Imqs  +  2 \Imqu  +1  }   \| \langle x \rangle ^{\Imqd -\Imqs } D^\beta  u \| _{ L^2 } .
\end{equation}
Note that by~\eqref{fSpa1:b2},  $\Spa \hookrightarrow \Spb$. 
Standard considerations show that $(\Spb,  \| \cdot  \|_\Spb )$ is a Banach space and that $\Srn$ is dense in $\Spb$.

\begin{lem} \label{eLE3:0}
Assume~\eqref{fDInt1}-\eqref{fSpa1b1}  with $\alpha =1$, and let the space
$ \Spb $ be defined by~\eqref{fSpa1:0}-\eqref{fSpa2:0}.
Given $\psi \in \Spb$, it follows that $e^{it \Delta } \psi \in C ( [0,\infty ), \Spb )$. 
Moreover, there exists a constant $C$ such that
\begin{equation} \label{eLE3:11:0}
 \| e^{it \Delta }  \psi \|_\Spb  \le C (1+t)^{  \Imqd }   \| \psi  \|_\Spb 
\end{equation} 
for all $t\ge 0$ and all $\psi \in \Spb$.
 \end{lem}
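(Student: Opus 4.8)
The plan is to prove the bound by a weighted energy argument, carried out first for $\psi\in\Srn$ and then extended by density. Write $v(t)=e^{it\Delta}\psi$. Since $e^{it\Delta}$ commutes with every $D^\beta$ and is an isometry on $L^2(\R^N)$, the unweighted terms $\sup_{|\beta|=\Imqc}\|D^\beta v(t)\|_{L^2}$, $0\le\Imqc\le2\Imqu$, are conserved and contribute no growth; all the growth will come from the weighted terms. The key computation is as follows. For $\psi\in\Srn$ the function $v$ is smooth in $t$ and Schwartz in $x$, $w\Eqdef D^\beta v$ satisfies $w_t=i\Delta w$, and differentiating under the integral sign gives
\[ \frac{d}{dt}\int_{\R^N}\langle x\rangle^{2p}|w|^2\,dx = 2\Re\Bigl[i\int_{\R^N}\langle x\rangle^{2p}\,\overline{w}\,\Delta w\,dx\Bigr]. \]
After integrating by parts (no boundary terms, by the rapid decay) the term $-\int\langle x\rangle^{2p}|\nabla w|^2$ is real and is annihilated by $2\Re[i\,\cdot\,]$, so only the term carrying a derivative of the weight, $\nabla(\langle x\rangle^{2p})=2p\langle x\rangle^{2p-2}x=O(\langle x\rangle^{2p-1})$, survives. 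Cauchy--Schwarz then yields $|\frac{d}{dt}\|\langle x\rangle^p D^\beta v\|_{L^2}^2|\le C\,\|\langle x\rangle^p D^\beta v\|_{L^2}\,\|\langle x\rangle^{p-1}\nabla D^\beta v\|_{L^2}$, whence, integrating in time,
\[ \|\langle x\rangle^p D^\beta v(t)\|_{L^2} \le \|\langle x\rangle^p D^\beta\psi\|_{L^2} + C\int_0^t\|\langle x\rangle^{p-1}\nabla D^\beta v(s)\|_{L^2}\,ds. \]
Thus lowering the weight by one while raising the order of differentiation by one is exactly the trade-off built into the definition of $\Spb$.

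Next I would arrange the weighted terms into cascades. Fix $0\le\Imqp\le\Imqt+1$ and set, for $0\le\Imqs\le\Imqd$,
\[ B_\Imqs(t) \Eqdef \sum_{|\beta|=\Imqp+\Imqs+2\Imqu+1}\|\langle x\rangle^{\Imqd-\Imqs}D^\beta v(t)\|_{L^2}. \]
Since each component of $\nabla D^\beta v$ is a derivative $D^\gamma v$ with $|\gamma|=|\beta|+1$, and $\Imqd-\Imqs-1=\Imqd-(\Imqs+1)$, the estimate above gives
\[ B_\Imqs(t) \le B_\Imqs(0) + C\int_0^t B_{\Imqs+1}(s)\,ds, \qquad 0\le\Imqs\le\Imqd-1, \]
while $B_\Imqd$ (weight $\langle x\rangle^0$, so $\nabla(\langle x\rangle^0)=0$) is conserved. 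Integrating this triangular system from the top --- $B_\Imqd(t)=B_\Imqd(0)$, $B_{\Imqd-1}(t)\le B_{\Imqd-1}(0)+Ct\,B_\Imqd(0)$, and so on --- an easy induction gives $B_\Imqs(t)\le C(1+t)^{\Imqd-\Imqs}\max_{\Imqs'\ge\Imqs}B_{\Imqs'}(0)$. The highest order that occurs, at $\Imqp=\Imqt+1$ and $\Imqs=\Imqd$, is $(\Imqt+1)+\Imqd+2\Imqu+1=\Imqh$, so the cascade never leaves $\Spb$. Summing over $\Imqs$ and $\Imqp$ and adding the conserved unweighted part yields $\|e^{it\Delta}\psi\|_\Spb\le C(1+t)^\Imqd\|\psi\|_\Spb$, which is~\eqref{eLE3:11:0} for $\psi\in\Srn$.

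Finally I would remove the Schwartz restriction. As $\Srn$ is dense in $\Spb$, applying the bound to $\psi-\psi_j$ with $\psi_j\to\psi$ in $\Spb$ shows that the sequence $(e^{it\Delta}\psi_j)_j$ is Cauchy in $\Spb$, uniformly on compact time intervals (the factor $(1+t)^\Imqd$ being bounded there); since $\Spb\hookrightarrow H^\Imqh$, its limit coincides with $e^{it\Delta}\psi$, which therefore lies in $\Spb$ and satisfies~\eqref{eLE3:11:0}. The curves $t\mapsto e^{it\Delta}\psi_j$ are continuous into $\Spb$ for $\psi_j\in\Srn$ (the $\Spb$-norm being dominated by finitely many Schwartz seminorms), and a locally uniform limit of continuous curves is continuous, so $e^{it\Delta}\psi\in C([0,\infty),\Spb)$.

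The main obstacle I anticipate is the rigorous justification of the weighted energy identity --- the differentiation under the integral and the boundary-term-free integration by parts --- which is precisely why I would carry out the computation only for $\psi\in\Srn$, where $v(t)$ is Schwartz and all weighted norms are finite, and pass to the general case afterwards. The one genuinely structural point, already signalled in the introduction, is that each integration by parts must land on a term that is again present in $\|\cdot\|_\Spb$; this is exactly what conditions~\eqref{fDInt1}-\eqref{fSpa1b1} guarantee, by matching the weight/order trade-off to the three index ranges defining $\Spb$.
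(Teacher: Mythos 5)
Your proposal is correct and takes essentially the same route as the paper's proof: the identical weighted energy identity (multiply by $\langle x\rangle^{2p}\overline w$, take the imaginary part, integrate by parts so that one power of the weight is traded for one derivative), iterated down the weight index --- your triangular cascade in $\Imqs$ is precisely the unrolled form of the paper's induction on $\Intq$ in the claim~\eqref{fSobz2} --- followed by the same reduction to Schwartz data via the density of $\Srn$ in $\Spb$. The only difference is presentational: the paper proves the inductive estimate for $e^{it\Delta}\psi$ with $\psi$ replaced by $D^\beta\psi$ and $\Intq=\Imqd-\Imqs$, whereas you index the cascade directly by the terms of the $\Spb$-norm.
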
 
 
\begin{proof} 
Given $\psi \in \Srn$, we have $e^{it \Delta }  \psi \in C([0,\infty ), \Srn ) \subset C([0, \infty ), \Spb )$.
Therefore, by density of $\Srn$ in $\Spb$, the result follows if we prove estimate~\eqref{eLE3:11:0} for $\psi \in \Srn$. 
So we consider $\psi \in \Srn$ and we set $v (t) = e^{it\Delta } \psi $. 
Since the Schr\"o\-din\-ger group is isometric on $H^{2\Imqu} (\R^N ) $, we need only estimate the weighted terms in $ \| e^{it \Delta }  \psi \|_\Spb$.  
We claim that, given any $\ell \in \N$,
\begin{equation} \label{fSobz2} 
 \| \langle x \rangle ^{\Intq } v(t) \| _{ L^2 } \le C  (1+t )^\Intq  \sum_{ \Ints =0 }^\Intq  \sum_{  |\beta  |=\Ints   }   \| \langle x \rangle ^{\Intq -\Ints } D^\beta  \psi \| _{ L^2 } 
 \end{equation} 
 where $C$ is independent of $\psi $. 
We note that~\eqref{fSobz2} is immediate for $\Intq =0$ (since $(e^{it \Delta }) _{ t\in \R }$ is isometric on $L^2 (\R^N ) $). We now proceed by induction on $\Intq $, so we suppose~\eqref{fSobz2} holds up to some $\Intq \ge 1$. 
We prove that
\begin{equation} \label{fSobz1} 
 \| \langle x \rangle ^{\Intq +1} v(t) \| _{ L^2  } \le  \| \langle x \rangle ^{\Intq +1} \psi \| _{ L^2 } +2(\Intq +1) t  \| \langle x \rangle ^{\Intq } \nabla v \| _{ L^\infty ( (0, t ), L^2 ) } .
\end{equation} 
This follows from an elementary integration by parts. 
Indeed, multiplying~\eqref{Low2} by $ \langle x \rangle ^{2\Intq +2} \overline{v} $ and taking the imaginary part, we obtain after integration by parts over $\R^N $
\begin{equation*} 
\begin{split} 
\frac {1} {2}\frac {d} {dt} \| \langle x \rangle ^{\Intq +1} v(t) \|  _{ L^2 }^2 & = \Im \int  _{ \R^N  } \nabla v \cdot \nabla (   \langle x \rangle ^{2\Intq +2}  \overline{v} ) = \Im \int  _{ \R^N  }  \overline{v}  \nabla v \cdot \nabla ( \langle x \rangle ^{2\Intq +2} ) \\ & \le 2(\Intq +2)   \int  _{ \R^N  }  \langle x \rangle ^{2\Intq +1}  |v|\,  |\nabla v|
\\ & \le  2(\Intq +2)   \| \langle x \rangle ^{\Intq  +1} v(t) \|  _{ L^2 }  \| \langle x \rangle ^{\Intq } \nabla v(t) \|  _{ L^2 }
\end{split} 
\end{equation*} 
where we used~\eqref{fSob4} in the next-to-last inequality,  and~\eqref{fSobz1} follows.
We now estimate the last term on the right-hand side of~\eqref{fSobz1} by applying~\eqref{fSobz2} at the level $\Intq $, but with $\psi $ replaced by $\nabla \psi $, and we obtain
\begin{equation*} 
 \| \langle x \rangle ^{\Intq +1} v(t)  \| _{ L^2 } \le  \| \langle x \rangle ^{\Intq +1} \psi \| _{ L^2 } +   C t (1+t )^\Intq  \sum_{ \Ints =0 }^\Intq  \sum_{  |\beta  |=\Ints  +1 }   \| \langle x \rangle ^{\Intq -\Ints } D^\beta  \psi \| _{ L^2 }  .
\end{equation*} 
Thus~\eqref{fSobz2} holds at the level $\Intq +1$, which proves~\eqref{fSobz2} for all $\Intq\ge 0$. 

We now fix $0\le \Imqp \le \Imqt +1$, $0\le \Imqs\le \Imqd$, we consider a multi-index $\beta $ such that $ |\beta | = \Imqp + \Imqs + 2 \Imqu  +1$, and we apply~\eqref{fSobz2} with $\psi $ replaced by $D^\beta \psi $, and $ \Intq= \Imqd- \Imqs$. It follows that 
\begin{equation} \label{fSobz2:1} 
\begin{split} 
 \| \langle x \rangle ^{\Imqd- \Imqs } D^\beta  v(t) \| _{ L^2 } & \le C  (1+t )^{\Imqd }  \sum_{ \Intc =0 }^{\Imqd- \Imqs}  \sum_{  |\gamma   |=\Intc  + \Imqp + \Imqs + 2 \Imqu  +1  }   \| \langle x \rangle ^{\Imqd- \Imqs -\Intc } D^\gamma   \psi \| _{ L^2 }  \\ &  = C  (1+t )^{\Imqd }  \sum_{ \Intc =\Imqs }^{\Imqd}  \sum_{  |\gamma   |=\Intc  + \Imqp  + 2 \Imqu  +1  }   \| \langle x \rangle ^{\Imqd -\Intc } D^\gamma   \psi \| _{ L^2 }   \\ & \le  C  (1+t )^{\Imqd }  \sum_{ \Intc =0 }^{\Imqd}  \sum_{  |\gamma   |=\Intc  + \Imqp  + 2 \Imqu  +1  }   \| \langle x \rangle ^{\Imqd -\Intc } D^\gamma   \psi \| _{ L^2 } \\ & \le C  (1+t )^{\Imqd }  \| \psi  \|_\Spb .
\end{split} 
 \end{equation} 
Thus we see that every weighted term in $ \| v(t)  \|_\Spb$ is estimated by $C  (1+t )^{\Imqd }  \| \psi  \|_\Spb $. This shows~\eqref{eLE3:11:0} and completes the proof. 
\end{proof} 

\begin{proof}[Proof of Proposition~$\ref{eLE3}$] 
Since $e^{-it \Delta }\psi =  \overline{e^{it\Delta }\psi } $ and the map $\psi \mapsto  \overline{\psi } $ is  isometric $\Spa \to \Spa$, we need only establish the various properties for $t\ge 0$.

Let $\psi \in \Spa$ and set $v(t)= e^{it\Delta } \psi $. We first prove that $v(t) \in \Spa$ for all $t\ge 0$ and~\eqref{eLE3:11} holds. It follows from~\eqref{Low5b1} and~\eqref{fSpa2}  that
\begin{equation} \label{Low5b2} 
\begin{split} 
 \sum_{ \Imqc =0 }^{2\Imqu  }   \sup _{  |\beta |=    \Imqc }  \| & \langle x\rangle ^\Imqd  D^\beta  v   \| _{ L^\infty ((0,t ) \times  \R^N ) } \le 
C (1+t)^{\Imqu }  \| \psi \|_\Spa \\ &  + C t (1+t)^{\Imqu }\sup _{  |\beta |=   2 \Imqu  +2}  \| \langle x\rangle ^\Imqd  D^\beta v \| _{ L^\infty ((0, t ) \times \R^N ) }.
\end{split} 
\end{equation} 
We apply Sobolev's embedding $H^\Imqt   \hookrightarrow L^\infty   $ to the last term in the right-hand side of~\eqref{Low5b2}. Given $2 \Imqu  +1 \le \Imqs \le  2 \Imqu  +2$ we have
\begin{equation} \label{eLE3:1}
\sup _{  |\beta |= \Imqs}  \| \langle x\rangle ^\Imqd  D^\beta v  \| _{ L^\infty ((0, t ) \times \R^N ) } \le C \sup  _{ \substack {0\le s\le t \\   |\beta | = \Imqs} }  \| \langle x\rangle ^\Imqd  D^\beta v(s)  \| _{ H^\Imqt } .
\end{equation} 
Note that
\begin{equation*}
 \| \langle x\rangle ^\Imqd  D^\beta v(s) \| _{ H^\Imqt } \le C \sum_{  |\gamma |\le \Imqt } \| D^\gamma ( \langle x\rangle ^\Imqd  D^\beta v(s)  ) \| _{ L^2 } 
\end{equation*} 
and that, by~\eqref{fSob5},
\begin{equation*}
 | D^\gamma  (\langle x\rangle ^\Imqd D^\beta v(s)  ) ) |  \le C \sum_{ \Imqq =0 }^{ |\gamma  |}   \sum_{  |\rho   |=\Imqq } \langle x\rangle ^{ \Imqd -  |\gamma  |+\Imqq } |D^{ \rho   + \beta} v(s)  | 
 \le C   \sum_{  |\rho   |\le  |\gamma | } \langle x\rangle ^{ \Imqd  } |D^{ \rho   + \beta} v(s)  | .
\end{equation*} 
Therefore 
\begin{equation} \label{eLE3:4}
\begin{split} 
\sup _{  |\beta | = \Imqs} \| \langle x\rangle ^\Imqd  D^\beta v(s)  \| _{ H^\Imqt } &  \le C \sup _{  |\beta | = \Imqs}  \sum_{  |\rho   |\le \Imqt }  \|   \langle x\rangle ^{ \Imqd  } D^{ \rho   + \beta} v(s) \| _{ L^2 } \\   \le C   \sum_{  |\beta    | = \Imqs } ^{ \Imqs +\Imqt} \|   \langle x & \rangle ^{ \Imqd } D^{ \beta  } v(s) \| _{ L^2 }  
 \le C  \| v(s)  \|_\Spb  \le C (1+s)^{  \Imqd }   \| \psi  \|_\Spb  
\end{split} 
\end{equation} 
where we used~\eqref{eLE3:11:0} in the last inequality. 
Estimate~\eqref{eLE3:11} (along with the property $v(t) \in \Spa$ for $t\ge 0$) follows from~\eqref{eLE3:11:0}, \eqref{Low5b2}, \eqref{eLE3:1}  and~\eqref{eLE3:4} (applied with $\Imqs =  2 \Imqu  +2$). 

Next, consider a multi-index $\beta $ with $ |\beta |\le 2 \Imqu$. 
If $ |\beta |+2 \ge 2 \Imqu +1$, then it follows from~\eqref{fInt1}, \eqref{eLE3:1}  and~\eqref{eLE3:4} (applied with $\Imqs =   |\beta |  +2$) that
\begin{equation*} 
 \| \langle x\rangle ^\Imqd D^\beta  ( v(t) - \psi ) \| _{ L^\infty  } \le \int _0^t  \| \langle x\rangle ^\Imqd D^\beta   \Delta v(s) \|  _{ L^\infty  } ds \le C t (1+t)^{  \Imqd }   \| \psi  \|_\Spb  .
\end{equation*} 
If $ |\beta |+2 \le 2 \Imqu $, then $ \| \langle x\rangle ^\Imqd D^\beta   \Delta v(s) \|  _{ L^\infty  } \le  \| v(s) \|_\Spa $. Therefore, it follows from~\eqref{fInt1} and~\eqref{eLE3:11} that $  \| \langle x\rangle ^\Imqd D^\beta  ( v(t) - \psi ) \| _{ L^\infty  } \le C t (1+t)^{\Imqu + \Imqd +1} $, which completes the proof of~\eqref{eLE3:12}.

We finally show that $v\in C ( [0,\infty ), \Spa )$.
By the semigroup property, we need only show continuity at $t=0$. 
Moreover, $v \in C ( [0,\infty ), \Spb )$ by Lemma~\ref{eLE3:0}, so we need only estimate the terms involving $L^\infty $ norms. This follows from~\eqref{eLE3:12}. 
\end{proof}  

\begin{rem} \label{eRem4} 
It follows in particular from Proposition~\ref{eLE3} that if $\psi \in \Spa$ satisfies $\inf  _{ x\in \R^N  } \langle x\rangle ^\Imqd  |\psi (x)| >0$, then $\inf  _{ x\in \R^N  } \langle x\rangle ^\Imqd  |e^{it \Delta }\psi (x)| >0$ provided $ |t|$ is sufficiently small. We do not know if this small time requirement is necessary.
\end{rem} 

\section{A nonlinear estimate} \label{sNL} 

We establish an estimate of $  |u|^\alpha u$ in the space $\Spa$.

\begin{prop} \label{eNL1} 
Let $\alpha >0$. 
Assume~\eqref{fDInt1}-\eqref{fSpa1b1}, and let the space $ \Spa$ be defined by~\eqref{fSpa1}-\eqref{fSpa2}. For every $\eta >0$  and $u\in \Spa $ such that 
\begin{equation} \label{eNL1:2} 
\eta \inf _{ x\in \R^N  } ( \langle x\rangle ^\Imqd  |u(x)|)  \ge 1
\end{equation} 
it follows that $ |u|^\alpha u\in \Spa$. Moreover, there exists a constant $C$ such that 
\begin{equation} \label{eNL1:1} 
 \| \, |u|^\alpha u \|_\Spa  \le C  (1+ \eta  \| u \|_\Spa)^{2 \Imqh}   \| u \|_\Spa ^{\alpha +1}
\end{equation} 
for all $\eta >0$  and $u\in \Spa $ satisfying~\eqref{eNL1:2}. In addition,
\begin{equation} \label{eNL1:2b2} 
\begin{split} 
 \| \, |u_1 |^\alpha u_1 &- |u_2 |^\alpha u_2 \|_\Spa \\ & \le C  \bigl((1+ \eta (  \| u_1 \|_\Spa  +  \| u_2 \|_\Spa ) \bigr)^{2 \Imqh +1} ( \| u_1\|_\Spa +  \|u_2\| _{ \Spa  })^\alpha  \| u_1 - u_2 \|_\Spa
\end{split} 
\end{equation} 
for all $\eta >0$  and $u_1, u_2\in \Spa $ satisfying~\eqref{eNL1:2}. 
\end{prop}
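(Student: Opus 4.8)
The plan is to exploit that $F(z):=|z|^\alpha z$ is $C^\infty$ on $\C\setminus\{0\}$, together with the quantitative non-vanishing $|u(x)|\ge(\eta\langle x\rangle^{n})^{-1}$ furnished by~\eqref{eNL1:2}. Regarding $F$ as a (real-)smooth map $\C\to\C$, every partial derivative $G_p$ of $F$ of order $p\ge1$ is positively homogeneous of degree $\alpha+1-p$, whence $|G_p(z)|\le C|z|^{\alpha+1-p}$ for $z\ne0$. By the Fa\`a di Bruno formula, $D^\beta(|u|^\alpha u)$ is a finite sum of terms $G_p(u)\prod_{i=1}^{p}D^{\beta_i}u^{(*)}$, where $u^{(*)}$ denotes $u$ or $\overline u$, each $|\beta_i|\ge1$ and $\beta_1+\cdots+\beta_p=\beta$; in particular $p\le|\beta|\le J$. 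I would prove both~\eqref{eNL1:1} and~\eqref{eNL1:2b2} by bounding each such term, absorbing the finitely many combinatorial constants into $C$ and using freely the elementary weighted-derivative identities of the Appendix (e.g.~\eqref{fSob5}).

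The core is a weight count resting on~\eqref{fDInt1}. For a low-order factor $|\beta_i|\le2m$ one has $|D^{\beta_i}u|\le\langle x\rangle^{-n}\|u\|_\Spa$; for $G_p(u)$ one uses $|u|\le\langle x\rangle^{-n}\|u\|_\Spa$ when $\alpha+1-p\ge0$ and the lower bound $|u|\ge(\eta\langle x\rangle^{n})^{-1}$ when $\alpha+1-p<0$, so that in either case $G_p(u)$ carries the weight $\langle x\rangle^{-n(\alpha+1-p)}$; consequently the product has total homogeneity $\|u\|_\Spa^{\alpha+1}$, a factor $(1+\eta\|u\|_\Spa)^{2J}$ (arising from the cases $\alpha+1-p<0$, where $p-\alpha-1<J$), and the $\langle x\rangle$-weights of $G_p$ and of the low-order factors combine to $\langle x\rangle^{-n(\alpha+1)}$. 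The decisive point is the condition $2m\ge k+n+1$: since $|\beta|\le J=2m+2+k+n<2(2m+1)$, at most one factor $D^{\beta_i}u$ can have order $\ge2m+1$. Hence in every term all but at most one factor lie in weighted $L^\infty$ and are controlled by $\|u\|_\Spa$, and the lone high-order factor, if present, is placed in a weighted $L^2$ norm, again controlled by $\|u\|_\Spa$.

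It then remains to treat the two pieces of $\|\cdot\|_\Spa$. For the $L^\infty$ part ($|\beta|\le2m$) all factors are of low order, so multiplying the bound above by $\langle x\rangle^{n}$ leaves the bounded weight $\langle x\rangle^{-n\alpha}\le1$, which yields the corresponding contribution to~\eqref{eNL1:1}. For a weighted $L^2$ term, with $|\beta|=\nu+\mu+2m+1$ and weight $\langle x\rangle^{n-\mu}$, there are two cases. If all factors are of low order, the weighted integrand is pointwise $\le C\langle x\rangle^{-\mu-n\alpha}(\cdots)$, which is square-integrable precisely because $n>\frac{N}{2\alpha}$, i.e.\ $2n\alpha>N$. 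If one factor $D^{\beta_1}u$ has high order $q=|\beta_1|\ge2m+1$, I would place $\langle x\rangle^{n-\mu'}D^{\beta_1}u$ in $L^2$ and the rest in $L^\infty$, choosing the index $\mu'$ through $q=\nu'+\mu'+2m+1$ so that $\langle x\rangle^{n-\mu'}D^{\beta_1}u$ is one of the terms of $\|u\|_\Spa$ and the residual $L^\infty$ weight $\langle x\rangle^{\mu'-\mu-n\alpha}$ is nonpositive; the admissibility of such a choice is exactly what $2m\ge k+n+1$ guarantees. Summing over terms gives~\eqref{eNL1:1}.

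Finally, for~\eqref{eNL1:2b2} I would apply Fa\`a di Bruno to $F(u_1)$ and $F(u_2)$ and write $D^\beta(|u_1|^\alpha u_1)-D^\beta(|u_2|^\alpha u_2)$ as a telescoping sum of two kinds of terms: those in which a single factor $D^{\beta_i}u_1$ is replaced by $D^{\beta_i}(u_1-u_2)$ (handled exactly as for~\eqref{eNL1:1}, now measuring that factor in $\|u_1-u_2\|_\Spa$), and those carrying the difference $G_p(u_1)-G_p(u_2)$. For the latter I would use the homogeneity of $G_p$ to derive a Lipschitz-type bound $|G_p(z_1)-G_p(z_2)|\le C\,\omega^{\alpha-p}|z_1-z_2|$, with $\omega=\min(|z_1|,|z_2|)$ when $p\ge\alpha$ and $\omega=\max(|z_1|,|z_2|)$ otherwise; the extra factor $\min(|u_1|,|u_2|)^{-1}\le\eta\langle x\rangle^{n}$ it produces is what raises the exponent from $2J$ to $2J+1$, while the weight count is identical to that of~\eqref{eNL1:1}. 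I expect this last bound to be the main obstacle: because only the moduli of $u_1$ and $u_2$ are bounded below, the segment $[u_1(x),u_2(x)]$ may pass through the origin, so one cannot integrate $\nabla G_p$ along it and the mean value theorem is unavailable; the estimate must instead be extracted directly from the homogeneity of $G_p$ (comparing moduli and phases, and separating the regimes of comparable and of very different moduli).
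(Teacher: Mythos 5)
Your proposal is correct, and its Step-1 core is the paper's own argument in different notation: the paper expands $D^\beta(|u|^\alpha u)$ by Leibniz applied to $|u|^\alpha=(u\bar u)^{\alpha/2}$, obtaining exactly your Fa\`a di Bruno terms in the explicit form $|u|^{\alpha-2p}D^\rho u\prod_{j}D^{\gamma_{1,j}}u\,D^{\gamma_{2,j}}\bar u$, and then uses the same three ingredients you identify: the lower bound~\eqref{eNL1:2} to absorb negative powers via $|u|^{\alpha-2p}\le\eta^{2p}\langle x\rangle^{2pn}|u|^\alpha$ (your $(1+\eta\|u\|_\Spa)^{2J}$ bookkeeping); the pigeonhole $4m+2>J\ge|\beta|$, coming from $2m\ge k+n+1$, so that at most one factor has order $\ge 2m+1$ and is placed in weighted $L^2$ while all others sit in weighted $L^\infty$; and $n\alpha>\frac{N}{2}$ for the square-integrability of $\langle x\rangle^{-n\alpha}$ when all factors are of low order (your $\mu'$-selection argument is the same as the paper's separate treatment of the ranges $2m+1\le|\gamma_{1,1}|\le 2m+2+k$ and $|\gamma_{1,1}|>2m+2+k$, where the weight $\langle x\rangle^{J-|\beta|}$ is compared with $\langle x\rangle^{n}$ or $\langle x\rangle^{J-|\gamma_{1,1}|}$). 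The one genuine divergence is in the Lipschitz estimate~\eqref{eNL1:2b2}. Because the paper's expansion isolates the modulus power $|u|^{\alpha-2p}$ as a scalar factor, the difference $|u_1|^{\alpha-2p}-|u_2|^{\alpha-2p}$ is controlled by the one-variable mean value theorem for $t\mapsto t^{\alpha-2p}$ on the interval between $|u_1|$ and $|u_2|$, which lies in $(0,\infty)$ by~\eqref{eNL1:2}; the segment-through-the-origin issue you flag therefore never arises in the paper. Your abstract $G_p$ formulation does require the two-dimensional bound $|G_p(z_1)-G_p(z_2)|\le C\,\omega^{\alpha-p}|z_1-z_2|$, but this is true and routine for a function positively homogeneous of degree $\alpha+1-p$ and smooth on $\C\setminus\{0\}$, exactly by the regime-splitting you sketch: if $|z_1-z_2|\le\frac{1}{2}\min(|z_1|,|z_2|)$, the segment $[z_1,z_2]$ stays in an annulus where $|\nabla G_p(z)|\le C|z|^{\alpha-p}$ and one integrates along it; otherwise $\max(|z_1|,|z_2|)\le\min(|z_1|,|z_2|)+|z_1-z_2|\le 3|z_1-z_2|$ and one bounds $|G_p(z_1)|+|G_p(z_2)|$ term by term using homogeneity. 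With that short lemma supplied, your proof is complete and yields the same exponents ($2J$ in~\eqref{eNL1:1}, raised to $2J+1$ in~\eqref{eNL1:2b2} by the single extra factor $\eta\langle x\rangle^{n}$); what the paper's explicit factorization buys is precisely that this last step degenerates to a one-dimensional mean value argument, while your route is marginally more general in that it never uses the specific algebraic form of $|z|^\alpha z$, only its homogeneity and smoothness away from $0$.
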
 

\begin{proof} 
We first calculate $D^\beta (|u|^\alpha u)$ with $1\le   |\beta |\le \Imqh$. 
We observe that 
\begin{equation} \label{eNL1:2b1} 
D^\beta (|u|^\alpha u) =   \sum_{  \gamma +\rho = \beta  } 
c _{ \gamma ,\rho  } D^\gamma (|u|^\alpha ) D^\rho u,
\end{equation} 
with the coefficients $c _{ \gamma ,\rho  }$ given by Leibniz's rule. 
Since $ |u|^\alpha = (u  \overline{u})^{\frac {\alpha } {2}} $ we see that the development of $D^\beta (|u|^\alpha u) $ contains on the one hand the term 
\begin{equation} \label{eNL1:3} 
A=   |u|^\alpha D^\beta u 
\end{equation} 
and on the other hand, terms of the form
\begin{equation} \label{eNL1:4} 
B=  |u|^{\alpha -2p} D^\rho u \prod  _{ j=1 }^p D^{\gamma _{1, j}} u D^{\gamma _{2, j}} \overline{u} 
\end{equation} 
where 
\begin{equation} \label{eNL1:5} 
\gamma +\rho =\beta ,  \quad 1\le p \le  |\gamma |, \quad  |\gamma  _{ 1,j } + \gamma  _{ 2,j }| \ge 1,  \quad \sum_{ j=0 }^p (\gamma  _{ 1,j } + \gamma  _{ 2,j }) = \gamma .
\end{equation} 
We now proceed in two steps.

\bigskip 

\noindent  {\bf Step 1.}\quad Proof of~\eqref{eNL1:1}.  
If $ |\beta |\le 2 \Imqu$, we need to estimate the terms $\langle x\rangle ^\Imqd A$ and $\langle x\rangle ^\Imqd B$ in $L^\infty $. 
If $2 \Imqu + 1 \le  |\beta | \le 2 \Imqu + 2 + \Imqt$, we need to estimate the terms $\langle x\rangle ^\Imqd A$ and $\langle x\rangle ^\Imqd B$ in $L^2 $. 
If $ 2 \Imqu + 3 + \Imqt \le  |\beta | \le  \Imqh$, we need to estimate the terms $\langle x\rangle ^{\Imqh -  |\beta |} A$ and $\langle x\rangle ^{\Imqh -  |\beta |} B$ in $L^2 $.

We note that the term~\eqref{eNL1:3} is very easy to handle, and gives contributions estimated  by $  \|u\| _\Spa^\alpha   \| u \|_\Spa $, hence by the right-hand side of~\eqref{eNL1:1}. 

We now  concentrate on the terms~\eqref{eNL1:4} and we observe that, due to the lower bound~\eqref{eNL1:2}
\begin{equation} \label{eNL1:6b1} 
 |u|^{\alpha -2p} \le \eta ^{2p} \langle x\rangle ^{2 p \Imqd }   |u|^{\alpha } \le  \eta ^{2p}  \langle x\rangle ^{(2 p - \alpha ) \Imqd } \| u \|_\Spa ^{\alpha }   .
\end{equation} 
so that
\begin{equation} \label{eNL1:6}  
 |B |   \le  \eta ^{2p} \langle x\rangle ^{2 p \Imqd }   |u|^{\alpha } \le  \eta ^{2p}  \langle x\rangle ^{(2 p - \alpha ) \Imqd } \| u \|_\Spa ^{\alpha }    |D^\rho u| \prod  _{ j=1 }^p  |D^{\gamma _{1, j}} u |\,  |D^{\gamma _{2, j}} u| .
\end{equation} 
We now consider three different cases.

\medskip 

\noindent {\bf Case 1.}\quad Suppose $ |\beta |\le 2 \Imqu$, so that we need to estimate $ \|  \langle x\rangle ^{ \Imqd }  B \| _{ L^\infty  }  $. It follows that all the derivatives in the right-hand side of~\eqref{eNL1:6} are also of order $\le 2 \Imqu$, hence estimated by $\langle x\rangle ^{- \Imqd }   \| u \|_\Spa $; and so~\eqref{eNL1:6} yields 
\begin{equation}  \label{eNL1:7} 
 |B | \le  
 (   \eta \| u \|_\Spa )^{2p}  \langle x\rangle ^{- (\alpha +1)\Imqd } 
   \| u \|_\Spa ^{\alpha  +1}.
\end{equation} 
Therefore,
\begin{equation} \label{eNL1:8} 
 \|  \langle x\rangle ^{ \Imqd }  B \| _{ L^\infty  }   \le  (   \eta \| u \|_\Spa )^{2p} 
   \| u \|_\Spa ^{\alpha  +1}
   \end{equation} 
which is controlled by the right-hand side of~\eqref{eNL1:1}. 

\medskip 

\noindent {\bf Case 2.}\quad Suppose $2 \Imqu + 1 \le  |\beta | \le 2 \Imqu + 2 + \Imqt$, so that we need to estimate $ \|  \langle x\rangle ^{ \Imqd }  B \| _{ L^2 }  $.
Assume one of the derivatives in the right-hand side of~\eqref{eNL1:6}  is of order $\ge 2 \Imqu + 1$, for instance  $ |\gamma  _{ 1,1 } |\ge 2 \Imqu + 1$. 
Since the sum of all derivatives has order $ |\beta |$, and $4 \Imqu + 2 >  2 \Imqu + 2 + \Imqt \ge  |\beta |$ (by the third inequality in~\eqref{fDInt1}), it follows that all other derivatives have order $\le 2 \Imqu$, hence are estimated by  $\langle x\rangle ^{- \Imqd }   \| u \|_\Spa $. Therefore, \eqref{eNL1:6} yields
\begin{equation}  \label{eNL1:9} 
 |B | \le   (   \eta \| u \|_\Spa )^{2p}  \langle x\rangle ^{ - \alpha  \Imqd } \| u \|_\Spa ^{\alpha }      |D^{\gamma _{1, 1}} u | .
\end{equation}
Since $ 2 \Imqu + 1 \le  |\gamma  _{ 1,1 } | \le  |\beta |\le 2 \Imqu + 2 + \Imqt $, we have $ \| \langle x\rangle ^{ \Imqd } D^{\gamma  _{ 1,1 }} u \| _{ L^2 } \le \| u \|_\Spa $, so we see that $ \| \langle x\rangle ^{ \Imqd } B \| _{ L^2 } $ is 
estimated by the right-hand side of~\eqref{eNL1:1}. 
If all the derivatives in the right-hand side of~\eqref{eNL1:6}  are of order $\le 2 \Imqu $, then they are estimated by  $\langle x\rangle ^{- \Imqd }  \| u \|_\Spa $, and we obtain again~\eqref{eNL1:7}, which yields 
\begin{equation}  \label{eNL1:10} 
 \langle x\rangle ^{ \Imqd }   |B |  \le (   \eta \| u \|_\Spa )^{2p}  \langle x\rangle ^{- \alpha \Imqd } 
   \| u \|_\Spa ^{\alpha  +1} .
\end{equation} 
Since $ \alpha  \Imqd > \frac {N} {2}$ by the second inequality in~\eqref{fDInt1}, the right-hand side of the above inequality belongs to $L^2 (\R^N ) $, and we obtain again an estimate by the right-hand side of~\eqref{eNL1:1}. 

\medskip 

\noindent {\bf Case 3.}\quad Suppose $ 2 \Imqu + 3 + \Imqt \le  |\beta | \le  \Imqh$, so that we need to estimate $\langle x\rangle ^{\Imqh -  |\beta |} B$ in $L^2 $. This is very similar to Case~2. Assume one of the derivatives in the right-hand side of~\eqref{eNL1:6}  is of order $\ge 2 \Imqu + 1$, for instance  $ |\gamma  _{ 1,1 } |\ge 2 \Imqu + 1$. 
Since the sum of all derivatives has order $ |\beta |$, and $4 \Imqu + 2 > \Imqh \ge  |\beta |$ (by the third inequality in~\eqref{fDInt1}), it follows that all other derivatives have order $\le 2 \Imqu$, hence are estimated by  $\langle x\rangle ^{- \Imqd } \| u \|_\Spa $. Therefore, \eqref{eNL1:6} yields
estimate~\eqref{eNL1:9}.  
If $ 2 \Imqu + 1 \le  |\gamma  _{ 1,1 } | \le   2 \Imqu + 2 + \Imqt $, we have $ \| \langle x\rangle ^{\Imqh -  |\beta |} D^{\gamma  _{ 1,1 }} u \| _{ L^2 }\le  \| \langle x\rangle ^{ \Imqd } D^{\gamma  _{ 1,1 }} u \| _{ L^2 } \le  \| u \|_\Spa $, so we deduce from~\eqref{eNL1:9} that $ \| \langle x\rangle ^{\Imqh -  |\beta |} B \| _{ L^2 } $ is 
estimated by the right-hand side of~\eqref{eNL1:1}. 
If $2 \Imqu + 3 + \Imqt \le |\gamma  _{ 1,1 } | \le  |\beta |$, then $  \| \langle x\rangle ^{\Imqh -  |\beta |} D^{\gamma  _{ 1,1 }} u \| _{ L^2 }\le  \| \langle x\rangle ^{ \Imqh - |\gamma  _{ 1,1 }|} D^{\gamma  _{ 1,1 }} u \| _{ L^2 } \le  \| u \|_\Spa $, and we conclude as just above. 
Finally, if all the derivatives in the right-hand side of~\eqref{eNL1:6}  are of order $\le 2 \Imqu $, then they are estimated by  $\langle x\rangle ^{- \Imqd }  \| u \|_\Spa $, and we obtain again~\eqref{eNL1:7}, which yields estimate~\eqref{eNL1:10}. Since $ \alpha  \Imqd > \frac {N} {2}$ by the second inequality in~\eqref{fDInt1}, the right-hand side of~\eqref{eNL1:10} belongs to $L^2 (\R^N ) $, and we obtain again an estimate by the right-hand side of~\eqref{eNL1:1}. 
This completes the proof of~\eqref{eNL1:1}.  

\medskip 

\noindent  {\bf Step 2.}\quad Proof of~\eqref{eNL1:2b2}.  We use the expressions~\eqref{eNL1:3} and~\eqref{eNL1:4} for both $u_1$ and $u_2$ and we form the difference. 
Suppose for instance that 
\begin{equation} \label{fSPb1} 
 \| u_2 \|_\Spa  \le  \| u_1 \|_\Spa .
\end{equation} 
Concerning~\eqref{eNL1:3}, this yields $ |u_1 |^\alpha D^\beta u_1 -  |u_2|^\alpha D^\beta u_2 $, which we write  $ |u_1|^\alpha (D^\beta u_1 - D^\beta u_2 ) + ( |u_1|^\alpha-  |u_2|^\alpha ) D^\beta u_2 $.  
Arguing as in Step~1, we see that the first term is estimated in the appropriate weighted spaces by $  \|u_1 \| _{ L^\infty  }^\alpha   \| u_1 -u_2 \|_\Spa $, hence by the right-hand side of~\eqref{eNL1:2b2}. The second term is estimated by $  \| \,  |u_1|^\alpha -  |u_2|^\alpha \| _{ L^\infty  } \| u_2 \|_\Spa $. 
We note that by~\eqref{eNL1:2} and~\eqref{fSPb1}
\begin{equation*} 
\begin{split} 
| \,  |u_1 |^\alpha -  |u_2 |^\alpha | & \le C( |u_1|^{-1 } + |u_2|^{-1 })  ( |u_1| +  |u_2|)^\alpha   |u_1 -u_2 | \\ &  \le  C   \eta   \langle x\rangle ^\Imqd    ( |u_1| +  |u_2|)^\alpha   |u_1 -u_2 | 
\\ &  \le  C   \eta    \| u_1 \|_\Spa^\alpha  \|u_1 -u_2 \|_\Spa 
\\ &  =  C    (\eta    \|u_1\|_\Spa )   \|u_1\|_\Spa^{\alpha -1}   \|u_1 -u_2 \|_\Spa  
\\ &  \le   C    (\eta    \|u_1\|_\Spa )   \|u_2\|_\Spa^{\alpha -1}   \|u_1 -u_2 \|_\Spa  
\end{split} 
\end{equation*} 
which yields again a control by the right-hand side of~\eqref{eNL1:2b2}.
We now examine the terms coming from the expression~\eqref{eNL1:4}. We note that~\eqref{eNL1:4} is equal to $ |u|^{\alpha -2p}$ multiplied by a multilinear expression of $u$. Therefore, the difference between the expressions for $u_1$ and $u_2$ can then be written as
\begin{equation}  \label{fSPb2} 
 ( |u_1|^{\alpha -2p} -  |u_2|^{\alpha -2p})  D^\rho u_2 \prod  _{ j=1 }^p D^{\gamma _{1, j}} u_2 D^{\gamma _{2, j}} \overline{u_2}  
\end{equation} 
 plus a sum of terms of the form 
\begin{equation}  \label{fSPb3} 
  |u_1 |^{\alpha -2p} D^\rho w  \prod  _{ j=1 }^p D^{\gamma _{1, j}} w_{1,j} D^{\gamma _{2, j}} \overline{w_{2,j}} 
\end{equation}  
where $w$, $w _{ 1,j }$, $w _{ 2,j }$ are all equal to either $u_1$ or $u_2$, except one of them which is equal to $u_1-u_2$. The terms~\eqref{fSPb3}  can easily be estimated as in Step~1 (Cases~2 and~3), and are controlled by the right-hand side of~\eqref{eNL1:2b2}.
Finally, it remains to estimate the term~\eqref{fSPb2}. We have (using again~\eqref{fSPb1})
\begin{equation*} 
\begin{split} 
| \,  |u_1|^{\alpha -2p} -  |u_2|^{\alpha -2p} | & \le C ( |u_1 |^{-2p-1 } + |u_2 |^{-2p-1 })  ( |u_1| +  |u_2|)^\alpha   |u_1 -u_2 |
 \\ & \le  C   \eta  ^{2p + 1 }   \langle x\rangle ^{ (2p + 1 )\Imqd}  ( |u_1| +  |u_2|)^\alpha  |u_1 -u_2 | 
 \\ & \le  C   \eta  ^{2p + 1 }   \langle x\rangle ^{ (2p -\alpha ) \Imqd }  \| u_1 \|_\Spa ^\alpha  \|u_1 -u_2 \|_\Spa  \\ & =  C   (\eta  \|u_1\|_\Spa )^{2p + 1 }  \langle x\rangle ^{ (2p  -\alpha ) \Imqd}   \|u_1\|_\Spa ^{\alpha -2p-1}  \|u_1 -u_2 \|_\Spa  .
\end{split} 
\end{equation*} 
We can use this estimate (along with~\eqref{fSPb1}) in exactly the same way as we used estimate~\eqref{eNL1:6b1}, and we can conclude as in Step~1. This completes the proof. 
\end{proof} 

\section{Proofs of Theorems~$\ref{eThm1b1}$ and~$\ref{eThm2}$} \label{sNLS} 

We will prove the following result, slightly more general than Theorem~\ref{eThm1b1}.

\begin{thm} \label{eThm1} 
Let $\alpha >0$ and $\lambda \in \C$. Assume~\eqref{fDInt1}-\eqref{fSpa1b1}, let $\Spa$ be defined by~\eqref{fSpa1}-\eqref{fSpa2} and $\Sig$ by~\eqref{fSpa1:b3}.  
Suppose $\DI = e^{i\frac {b |x|^2} {4}} \DIb $ where $b\in \R$, and $\DIb \in \Spa$ satisfies~\eqref{eThm1:1}.
 It follows that there exist $T>0$ and a unique solution $u\in C([-T,T], \Sig ) \cap L^\infty ((-T,T) \times \R^N )$  of~\eqref{NLS1:i}.  Moreover,  the map $t\mapsto  e^{- i \frac {b  |x|^2} {4 (1+ bt)}} u(t,x)$ is continuous $[-T, T] \to \Spa$.
\end{thm}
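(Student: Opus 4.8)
Theorem~\ref{eThm1} asserts local existence in $\Sig$ for initial data $\DI = e^{i\frac{b|x|^2}{4}}\DIb$ with $\DIb\in\Spa$ satisfying the non-vanishing condition~\eqref{eThm1:1}, together with the stronger regularity statement that the modulated solution $t\mapsto e^{-i\frac{b|x|^2}{4(1+bt)}}u(t,x)$ is continuous into $\Spa$. The presence of the quadratic-phase factor and its evolving denominator $1+bt$ is the signature of the pseudo-conformal (lens) transformation, so the natural strategy is to conjugate the problem by this phase and solve the transformed equation in $\Spa$ by the contraction mapping principle, using Proposition~\ref{eLE3} (linear estimate) and Proposition~\ref{eNL1} (nonlinear estimate).

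**The plan.** First I would reduce to the case $b=0$ by the pseudo-conformal change of variables. Setting $w(t,x) = e^{-i\frac{b|x|^2}{4(1+bt)}}u(t,x)$ (appropriately combined with the dilation $1+bt$ and time reparametrization $\tau = t/(1+bt)$ that the lens transform dictates), one checks that $u$ solves~\eqref{NLS1:i} if and only if $w$ solves a companion integral equation of the same structure, but with $\lambda$ multiplied by a smooth, bounded time factor on a short interval, and with initial value $\DIb\in\Spa$. Concretely, the unknown $w$ lives in $\Spa$ and the equation reads $w(\tau) = e^{i\tau\Delta}\DIb + i\lambda\int_0^\tau \theta(\sigma) e^{i(\tau-\sigma)\Delta}|w(\sigma)|^\alpha w(\sigma)\,d\sigma$ for a factor $\theta$ that is continuous and bounded for small times. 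This is exactly the setting in which Propositions~\ref{eLE3} and~\ref{eNL1} apply, because for $b=0$ the modulation is trivial and one works directly in $\Spa$.

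**The fixed point.** I would then set up the contraction on the ball
\begin{equation*}
\mathcal{B} = \Bigl\{ w\in C([-T,T],\Spa) : \sup_{|\tau|\le T}\|w(\tau)\|_\Spa \le M,\ \ \inf_{|\tau|\le T}\inf_{x}\langle x\rangle^\Imqd|w(\tau,x)| \ge c \Bigr\}
\end{equation*}
for suitable constants $M$ and $c$ and small $T$. The key point is that this ball must be invariant: the lower-bound constraint $\inf_x\langle x\rangle^\Imqd|w|\ge c$ is needed so that Proposition~\ref{eNL1} can be invoked (its hypothesis~\eqref{eNL1:2} is exactly a pointwise lower bound with $\eta = 1/c$). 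Continuity of the Duhamel map into $\Spa$ follows from Proposition~\ref{eLE3}; the nonlinear map $w\mapsto|w|^\alpha w$ maps the ball into $\Spa$ with the bound~\eqref{eNL1:1}, and the Lipschitz estimate~\eqref{eNL1:2b2} gives contraction for $T$ small. Preservation of the lower bound uses Remark~\ref{eRem4} (propagation of non-vanishing under $e^{it\Delta}$ for small time) for the linear part, together with the fact that the Duhamel integral is $O(T)$-small in $\Spa$, hence in the $\langle x\rangle^\Imqd L^\infty$ norm by~\eqref{fSpa2}, so it cannot destroy a strictly positive infimum on a short enough interval.

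**Main obstacle and conclusion.** The principal difficulty is maintaining the \emph{two-sided} control: the $\Spa$-norm bound and the pointwise lower bound must be simultaneously stable under the iteration. The lower bound is the delicate one, since there is no maximum principle; here Remark~\ref{eRem4} does the essential work for the free evolution, and one must verify that the quadratic-phase conjugation does not spoil it (multiplication by a unimodular factor preserves $|w|$ exactly, which is why the modulated variable is the right one to track). A secondary technical point is that the original solution lives a priori only in $\Sig$ via the embedding~\eqref{fSpa1:b2}, while the regularity statement concerns $\Spa$; once $w\in C([-T,T],\Spa)$ is constructed, undoing the lens transform shows $e^{-i\frac{b|x|^2}{4(1+bt)}}u(t,\cdot)\in C([-T,T],\Spa)$ directly, and the embedding~\eqref{fSpa1:b2} together with Remark~\ref{eRem2}~\eqref{eRem2:1} gives $u\in C([-T,T],\Sig)\cap L^\infty((-T,T)\times\R^N)$ and makes sense of~\eqref{NLS1:i} in $\Sig$. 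Uniqueness in this class follows from the Lipschitz estimate~\eqref{eNL1:2b2} by a standard Gronwall argument on the difference of two solutions.
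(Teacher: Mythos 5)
Your construction is, in all essentials, the paper's own proof. Your modulated variable $w$ is exactly the paper's $v$ solving the transformed equation~\eqref{fNLS3} (the time factor you call $\theta$ is $(1-bs)^{-\frac{4-N\alpha}{2}}$, bounded on $[-T,T]$ for $T<\frac{1}{|b|}$); your ball $\mathcal{B}$ is the set $\Ens$ of Proposition~\ref{eNLS1} with $\eta=1/c$; the invariance and contraction estimates are the paper's \eqref{fNLS4}--\eqref{fNLS6}, built on Propositions~\ref{eLE3} and~\ref{eNL1}; and the preservation of the pointwise lower bound is the paper's \eqref{fNLS7}, which rests on \eqref{eLE3:12} exactly as you say. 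Undoing the lens transform via the formula \eqref{fNLS1:0} then yields existence, the $\Sig\cap L^\infty$ regularity, and the continuity of $t\mapsto e^{-i\frac{b|x|^2}{4(1+bt)}}u(t,\cdot)$ into $\Spa$, just as in the paper.

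The one genuine defect is your uniqueness step. Theorem~\ref{eThm1} asserts uniqueness in the class $C([-T,T],\Sig)\cap L^\infty((-T,T)\times\R^N)$, and estimate~\eqref{eNL1:2b2} cannot be invoked there: it requires $u_1,u_2\in\Spa$ and, crucially, the pointwise lower bound~\eqref{eNL1:2}, neither of which a competing solution in the stated class need satisfy (it may vanish somewhere, and it need not lie in $\Spa$ at all). As written, your Gronwall argument proves uniqueness only among elements of the contraction ball, which is strictly weaker than the theorem's claim. The correct route — and what the paper means by uniqueness being ``immediate (because of the $L^\infty$ bound)'' — bypasses $\Spa$ entirely: for $|z_1|,|z_2|\le M$ one has the pointwise Lipschitz bound
\begin{equation*}
\bigl|\,|z_1|^\alpha z_1-|z_2|^\alpha z_2\bigr|\le C(1+M)^\alpha |z_1-z_2|,
\end{equation*}
so subtracting the two Duhamel formulas~\eqref{NLS1:i} and using that $e^{it\Delta}$ is an isometry of $L^2(\R^N)$ gives $\|u_1(t)-u_2(t)\|_{L^2}\le C\int_0^t\|u_1(s)-u_2(s)\|_{L^2}\,ds$, and Gronwall concludes. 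With that repair, your proposal coincides with the paper's argument.
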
 

We let $b\in \R$ and we consider equation~\eqref{NLS2}, or its equivalent integral form
\begin{equation} \label{fNLS3} 
v(t)= e^{it \Delta } \DIb + i \lambda \int _0^t  (1- b s)^{-\frac {4 - N\alpha } {2}} e^{i(t-s) \Delta }  |v|^\alpha v \, ds .
\end{equation} 
We prove existence results for~\eqref{fNLS3}, of which Theorems~\ref{eThm1} and~\ref{eThm2} are immediate consequences, by using the pseudo-conformal transformation. 

\begin{prop} \label{eNLS1} 
Let $\alpha > 0$ and $\lambda \in \C$. Assume~\eqref{fDInt1}-\eqref{fSpa1b1},  and let the space $\Spa$ be defined by~\eqref{fSpa1}-\eqref{fSpa2}.
 Given any $b\in \R$ and $\DIb \in \Spa$ satisfying~\eqref{eThm1:1}, there exist $0<T<\frac {1} { |b|}$ and a solution $v \in C( [-T,T], \Spa )$ of~\eqref{fNLS3}.
\end{prop}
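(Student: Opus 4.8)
The plan is to solve \eqref{fNLS3} by a contraction mapping argument built around the two main estimates already established, namely the linear estimate of Proposition~\ref{eLE3} and the nonlinear estimate of Proposition~\ref{eNL1}. First I note that the hypotheses \eqref{fDInt1} for the given $\alpha$ force $\Imqd > \frac{N}{2}+1$, which is precisely \eqref{fDInt1} with $\alpha=1$; hence Proposition~\ref{eLE3} (and Remark~\ref{eRem4}) apply without change. I consider the solution map
\begin{equation*}
\Phi(v)(t) = e^{it\Delta}\DIb + i\lambda \int_0^t (1-bs)^{-\frac{4-N\alpha}{2}} e^{i(t-s)\Delta} |v(s)|^\alpha v(s)\, ds ,
\end{equation*}
and I observe that, since the solution will be sought on an interval $|t|\le T < \frac{1}{|b|}$, the scalar factor $(1-bs)^{-\frac{4-N\alpha}{2}}$ is a bounded continuous function of $s\in[-T,T]$, so the integrability of this factor at $\frac1b$ plays no role at this local stage.

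The natural complete metric space is
\begin{equation*}
\Ens = \Bigl\{ v \in C([-T,T],\Spa) : \sup_{|t|\le T}\|v(t)\|_\Spa \le M, \ \inf_{|t|\le T}\inf_{x\in\R^N}\langle x\rangle^\Imqd |v(t,x)| \ge m \Bigr\},
\end{equation*}
with $M = 2C\|\DIb\|_\Spa$, where $C$ is the constant in \eqref{eLE3:11}, and $m = \frac12 \inf_x \langle x\rangle^\Imqd|\DIb(x)|$, the latter being positive by \eqref{eThm1:1} applied to $\DIb$. Since convergence in $\Spa$ controls the weighted norm $\|\langle x\rangle^\Imqd\,\cdot\,\|_{L^\infty}$ (the $\Imqc=0$ term of \eqref{fSpa2}), the lower-bound constraint passes to the limit, so $\Ens$ is closed, hence complete for the metric induced by $C([-T,T],\Spa)$.

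I then verify that $\Phi$ maps $\Ens$ into itself. The membership $v\in\Ens$ guarantees \eqref{eNL1:2} with $\eta=1/m$, so Proposition~\ref{eNL1} yields $|v(s)|^\alpha v(s)\in\Spa$ with $\Spa$-norm bounded uniformly on $\Ens$; combined with the linear bound \eqref{eLE3:11} and the boundedness of the scalar factor, the Duhamel term has $\Spa$-norm $O(T)$, while \eqref{eLE3:11} keeps $\|e^{it\Delta}\DIb\|_\Spa\le C(1+|t|)^{\Imqu+\Imqd+1}\|\DIb\|_\Spa$ below $M$ for $T$ small, giving $\|\Phi(v)(t)\|_\Spa\le M$. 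For the lower bound the decisive tool is estimate \eqref{eLE3:12} (Remark~\ref{eRem4}): its $\beta=0$ case gives $\|\langle x\rangle^\Imqd(e^{it\Delta}\DIb-\DIb)\|_{L^\infty}\le C|t|(1+|t|)^{\Imqu+\Imqd+1}\|\DIb\|_\Spa$, and the Duhamel term is likewise $O(T)$ in $\langle x\rangle^\Imqd$-weighted $L^\infty$; hence $\inf_x\langle x\rangle^\Imqd|\Phi(v)(t,x)| \ge \inf_x\langle x\rangle^\Imqd|\DIb| - O(T) \ge m$ once $T$ is small enough.

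Finally, for the contraction I apply the difference estimate \eqref{eNL1:2b2}, valid because $v_1,v_2\in\Ens$ both satisfy \eqref{eNL1:2} with $\eta=1/m$, together with \eqref{eLE3:11}; this yields $\sup_{|t|\le T}\|\Phi(v_1)(t)-\Phi(v_2)(t)\|_\Spa \le C(T)\sup_{|t|\le T}\|v_1(t)-v_2(t)\|_\Spa$ with $C(T)=O(T)$, so $\Phi$ is a strict contraction for $T$ sufficiently small. The Banach fixed-point theorem then produces the unique $v\in\Ens$ solving \eqref{fNLS3}, and the continuity $t\mapsto v(t)\in\Spa$ follows from Proposition~\ref{eLE3} and the continuity of the integral term. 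I expect the main obstacle to be precisely the preservation of the lower bound along the iteration: because Proposition~\ref{eNL1} is available only for functions bounded below, the constraint $\inf_x\langle x\rangle^\Imqd|v|\ge m$ must be built into $\Ens$ and shown to be stable under $\Phi$, and this is exactly where the refined estimate \eqref{eLE3:12}, rather than the mere continuity or boundedness of the Schr\"odinger group, is indispensable.
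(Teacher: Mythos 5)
Your proposal is correct and follows essentially the same route as the paper's own proof: a Banach fixed-point argument on the complete metric subset of $C([-T,T],\Spa)$ defined by a uniform $\Spa$-bound together with the uniform weighted lower bound $\eta\langle x\rangle^{\Imqd}|v(t,x)|\ge 1$, with Proposition~\ref{eNL1} giving the nonlinear and difference estimates, Proposition~\ref{eLE3} giving the linear bound, and \eqref{eLE3:12} used exactly as in the paper to show the lower bound is preserved by the solution map for small $T$. Your choices $M=2C\|\DIb\|_\Spa$ and $m=\frac12\inf_x\langle x\rangle^{\Imqd}|\DIb(x)|$ correspond to the paper's $K$ and $\eta$ in \eqref{fNLS8}--\eqref{fNLS9}, and your observation that \eqref{fDInt1} for general $\alpha>0$ implies the $\alpha=1$ version needed for Proposition~\ref{eLE3} is a point the paper uses implicitly.
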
 

\begin{proof} 
We use a standard contraction mapping argument, based on the linear estimates of Proposition~\ref{eLE3} and the nonlinear estimates of Proposition~\ref{eNL1}. We  let 
\begin{equation*} 
\eta >0, \quad K>0, \quad 0<T< \max \{  {\textstyle  \frac {1} { |b|}  } , 1\}
\end{equation*} 
and we define the set $\Ens $ by
\begin{equation} 
\begin{split} 
\Ens = \{ v\in C([-T, T ], \Spa ); & \, \| v \| _{ L^\infty (-T,T), \Spa ) } \le K \text{ and } \\  \eta \langle x \rangle ^\Imqd   |v(t,x)| \ge 1 & \text{ for }  -T<t<T, x\in \R^N \}.
\end{split} 
\end{equation} 
It follows that $\Ens$ equipped with the distance $\dist (u,v)=  \|u-v\|_{L^\infty ((-T,T), \Spa )}$ is a complete metric space. Given  $v\in \Ens$ and $\DIb \in \Spa$, we set
\begin{gather} 
\Phi  _{  v } (t)  =  i \lambda \int _0^t  (1- b s)^{-\frac {4 - N\alpha } {2}} e^{i(t-s) \Delta }  |v|^\alpha v \, ds
\label{fMap1}  \\
\Psi   _{ \DIb, v } (t) = e^{it \Delta } \DIb +  \Phi  _{  v } (t) \label{fMap2}
\end{gather} 
for $-T<t<T$. 
We observe that the definition of $\Ens$ together with Proposition~\ref{eNL1} imply that if $u\in \Ens$, then $ |u|^\alpha u\in C([-T, T], \Ens )$ and
\begin{equation} \label{fSp1b} 
\| \,  |v|^\alpha v \| _{ L^\infty (-T,T), \Spa ) } \le  C  (1+ \eta K)^{2 \Imqh}  K^{\alpha +1} .
\end{equation} 
Using  Proposition~\ref{eLE3}, we deduce that the map $s\mapsto e^{is\Delta } |u(s)|^\alpha u(s)$ belongs to $C([-T,T], \Spa )$, so that (still using Proposition~\ref{eLE3}) $\Phi  _{ v }\in C([-T,T], \Spa )$.
In addition, we deduce from~\eqref{fSp1b} and~\eqref{eLE3:11} that
\begin{equation} \label{fNLS4} 
\| \Phi  _{  v } \| _{ L^\infty (-T, T), \Spa ) } \le
C  |\lambda | f(T) 2^{\Imqh}   
 (1+ \eta K)^{2 \Imqh}  K^{\alpha +1}
\end{equation} 
and
\begin{equation} \label{fNLS5} 
\| \Psi  _{\DIb,  v } \| _{ L^\infty (-T,T), \Spa ) } \le
C  2^{\Imqh}   \bigl(  \|\DIb \|_\Spa +   |\lambda | f(T)  
 (1+ \eta K)^{2 \Imqh}  K^{\alpha +1} \bigr)
\end{equation} 
where
\begin{equation} \label{fNLS5:0} 
f(T)= \int _0^T \max \{ (1- b s)^{-\frac {4 - N\alpha } {2}} , (1+ b s)^{-\frac {4 - N\alpha } {2}}  \}\, ds \goto _{ T\downarrow 0 }0.
\end{equation} 
By a similar argument, it follows from~\eqref{eNL1:2b2} that if $v,w\in \Ens$, then
\begin{equation} \label{fNLS6} 
\| \Phi  _{  v } - \Phi _w \| _{ L^\infty (-T, T), \Spa ) }   \le
C  |\lambda |   f(T)   2^{\Imqh}   
 (1+ \eta K)^{2 \Imqh +1} K^\alpha    \dist (v, w) .
\end{equation} 
Next, we deduce from~\eqref{eLE3:12} and~\eqref{fNLS4}  that
\begin{equation} \label{fNLS7} 
\begin{split} 
 \langle  x \rangle ^\Imqd  |\Psi  _{ \DIb, v }(t,x)|  \ge & \inf _{ x\in \R^N  } \langle x \rangle ^\Imqd  |\DIb (x)| - C T 2^{\Imqh }   \| \DIb  \|_\Spa -  \| \Phi _v (t) \|_\Spa \\  \ge
 \inf _{ x\in \R^N  } \langle x  \rangle ^\Imqd & |\DIb (x)|  - C T   2^{\Imqh}  (  \| \DIb   \|_\Spa +   |\lambda |  f(T) (1+ \eta K)^{2 \Imqh}  K^{\alpha +1} )
\end{split} 
\end{equation} 
for all $-T\le t\le T$ and $x\in \R^N $. 
We now argue as follows.
We denote by $ \widetilde{C} $ the supremum of the constants $C$ in~\eqref{fNLS4}--\eqref{fNLS7} 
and we consider $\DIb \in \Spa$ such that $\inf  _{ x\in \R^N  }  \langle  x \rangle ^\Imqd  |\DIb (x) | >0$.
(Note that there exist such $\DIb$, see Remark~\ref{eRem1}~\eqref{eRem1:3}.) We set
\begin{align} 
\eta & = 2 ( \inf  _{ x\in \R^N  }  \langle  x \rangle ^\Imqd  |\DIb (x) | )^{-1} \label{fNLS8}  \\
K &= 2  \widetilde{C}  2^{\Imqh}   \| \DIb \|_\Spa . \label{fNLS9} 
\end{align} 
It follows in particular that $v(t) \equiv \DIb$ belongs to $\Ens$, so that $\Ens \not = \emptyset$.
We fix $T$ sufficiently small so that 
\begin{gather} 
  \widetilde{C}   2^{\Imqh}     |\lambda | f(T)  
 (1+ \eta K)^{2 \Imqh +1}  K^{\alpha } \le \frac {1} {2} \label{fNLS10} \\
 \widetilde{C}  T   2^{\Imqh}  (  \| \DIb   \|_\Spa +   |\lambda |  f(T) (1+ \eta K)^{2 \Imqh}  K^{\alpha +1} ) \le \frac {1} {\eta} . \label{fNLS11}  
 \end{gather} 
Inequalities~\eqref{fNLS5}, \eqref{fNLS9} and~\eqref{fNLS10} imply that $ \|\Psi  _{ \DIb, v }\| _{ L^\infty (-T, T), \Spa ) } \le K$. Moreover, \eqref{fNLS7}, \eqref{fNLS8} and~\eqref{fNLS11} imply that $ \eta \langle x \rangle ^\Imqd  |\Psi  _{ \DIb, v }(t,x)| \ge 1$ for $-T \le t\le T$ and $x\in \R^N $. 
Thus we see that $\Psi  _{ \DIb, v } \in  \Ens$ for all $v\in \Ens$. 
In addition, it follows from~\eqref{fNLS6} and~\eqref{fNLS10} that the map $v\mapsto \Psi  _{ \DIb, v }$ is a strict contraction $\Ens \to \Ens$. Therefore, it has a fixed point, which is a solution of~\eqref{fNLS3} on $[-T, T]$. 
\end{proof}  

\begin{proof} [Proof of Theorem~$\ref{eThm1}$]
Given $\DIb \in \Spa$ satisfying~\eqref{eThm1:1} and  $b\in \R$, let $0<T < \frac {1} { |b|}$ and $v\in  C([-T, T], \Spa) $ be the solution of~\eqref{fNLS3} given by Proposition~\ref{eNLS1}.
Let  $u$ be defined by
\begin{equation} \label{fNLS1:0} 
u(t, x) = (1+bt )^{-\frac {N} {2}}  e^{i \frac {b  |x|^2} {4 (1+ bt)}} v  \Bigl( \frac {t} {1+bt}, \frac {x} {1+bt} \Bigr)
\end{equation} 
for $-\frac {T} {1+bT}\le t\le \frac {T} {1+bT}$ and $x\in \R^N $. 
Since $v\in C([-T,T], \Sig ) \cap L^\infty ((-T,T)\times \R^N )$, it follows from elementary calculations that 
\begin{equation*} 
u\in C([-\frac {T} {1+bT}, \frac {T} {1+bT}], \Sig ) \cap L^\infty ((-\frac {T} {1+bT}, \frac {T} {1+bT})\times \R^N ) 
\end{equation*} 
is a solution of~\eqref{NLS1:i} on $[-\frac {T} {1+bT}, \frac {T} {1+bT}]$ and $u(0) = e^{i\frac {b |x|^2} {4}} \DIb $. 
See e.g.~\cite[Section~3]{CWrapdec}. 
Finally, uniqueness in  $C([-T,T], \Sig ) \cap L^\infty ((-T,T) \times \R^N )$ is immediate (because of the $L^\infty $ bound), and the proof is complete.
\end{proof} 

\begin{prop} \label{eNLS1b1} 
Let $\alpha >\frac {2} {N}$ and $\lambda \in \C$. Assume~\eqref{fDInt1}-\eqref{fSpa1b1},  and let the space $\Spa$ be defined by~\eqref{fSpa1}-\eqref{fSpa2}.
 Given any $\DIb \in \Spa$ satisfying~\eqref{eThm1:1}, there exists  a solution $v \in C( [0, \frac {1} {b}], \Spa )$ of~\eqref{fNLS3} on $[0, \frac {1} {b}]$ provided $b>0$ is sufficiently large. 
\end{prop}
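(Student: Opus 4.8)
The plan is to repeat the contraction-mapping scheme of the proof of Proposition~\ref{eNLS1}, now on the entire interval $[0,\frac{1}{b}]$ up to the singular endpoint, exploiting the largeness of $b$ in place of the smallness of $T$. The decisive point is that the hypothesis $\alpha>\frac{2}{N}$ forces $\frac{4-N\alpha}{2}<1$, so the weight $(1-bs)^{-\frac{4-N\alpha}{2}}$ is integrable at $\frac{1}{b}$. Indeed, the substitution $\sigma=bs$ gives
\begin{equation*}
g(b)\Eqdef \int_0^{1/b}(1-bs)^{-\frac{4-N\alpha}{2}}\,ds=\frac{1}{b}\int_0^1(1-\sigma)^{-\frac{4-N\alpha}{2}}\,d\sigma=\frac{2}{(N\alpha-2)\,b},
\end{equation*}
which is finite and, crucially, tends to $0$ as $b\to\infty$.

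First I would fix $b\ge1$, so that $\frac{1}{b}\le1$ and every factor $(1+|t|)^{\Imqu+\Imqd+1}$ produced by Proposition~\ref{eLE3} is bounded by $2^{\Imqu+\Imqd+1}$ on $[0,\frac{1}{b}]$, and set up the complete metric space
\begin{equation*}
\Ens=\bigl\{v\in C([0,{\textstyle\frac{1}{b}}],\Spa);\ \|v\|_{L^\infty((0,1/b),\Spa)}\le K\ \text{and}\ \eta\langle x\rangle^\Imqd|v(t,x)|\ge1\bigr\}
\end{equation*}
together with the map $\Psi_{\DIb,v}(t)=e^{it\Delta}\DIb+\Phi_v(t)$, where $\Phi_v$ is given by~\eqref{fMap1}. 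Combining the nonlinear bound~\eqref{eNL1:1} with the linear bound~\eqref{eLE3:11}, the estimates~\eqref{fNLS4}--\eqref{fNLS7} then hold verbatim, with the quantity $f(T)$ replaced everywhere by $g(b)$; in particular $\Phi_v$ obeys a bound carrying the factor $g(b)$, and the contraction constant likewise carries $g(b)$. For the pointwise lower bound I would, as in~\eqref{fNLS7}, use~\eqref{eLE3:12} (with $|\beta|=0$) to estimate $\langle x\rangle^\Imqd|e^{it\Delta}\DIb-\DIb|$ by $C\frac{1}{b}2^{\Imqu+\Imqd+1}\|\DIb\|_\Spa$, and the above bound on $\|\Phi_v\|_\Spa$ for the remaining contribution.

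Next I would choose $\eta=2(\inf_{x}\langle x\rangle^\Imqd|\DIb(x)|)^{-1}$ and $K=2\widetilde{C}\,2^\Imqh\|\DIb\|_\Spa$ exactly as in Proposition~\ref{eNLS1}; note that these depend only on $\DIb$ and not on $b$. It then remains to impose the two smallness conditions analogous to~\eqref{fNLS10}--\eqref{fNLS11}: one guaranteeing that $\Psi_{\DIb,v}$ maps $\Ens$ into itself and is a strict contraction, the other guaranteeing that the lower bound $\eta\langle x\rangle^\Imqd|\Psi_{\DIb,v}|\ge1$ persists. Since both $g(b)$ and $\frac{1}{b}$ tend to $0$, a single requirement that $b$ be sufficiently large makes both conditions hold, and the Banach fixed-point theorem then produces the desired solution $v\in C([0,\frac{1}{b}],\Spa)$.

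The step I expect to be the main obstacle is the behaviour at the endpoint $t=\frac{1}{b}$, where the coefficient $(1-bt)^{-\frac{4-N\alpha}{2}}$ diverges: one must verify that the solution belongs to $C([0,\frac{1}{b}],\Spa)$ up to and including $\frac{1}{b}$, and not merely on $[0,\frac{1}{b})$. I would handle this by writing $\Phi_v(t)=i\lambda\,e^{it\Delta}\int_0^t(1-bs)^{-\frac{4-N\alpha}{2}}e^{-is\Delta}|v(s)|^\alpha v(s)\,ds$ and noting that, for $v\in\Ens$, the $\Spa$-valued integrand has $\Spa$-norm dominated by the scalar $C(1-bs)^{-\frac{4-N\alpha}{2}}$, which is integrable on $[0,\frac{1}{b}]$. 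Hence the Bochner integral $W(t)=\int_0^t(\cdots)\,ds$ is well defined and continuous in $\Spa$ up to the endpoint by absolute continuity of the integral, and $\Phi_v=i\lambda\,e^{it\Delta}W(t)$ inherits continuity because $(e^{it\Delta})$ is strongly continuous and, by Proposition~\ref{eLE3}, uniformly bounded on the compact interval $[0,\frac{1}{b}]$.
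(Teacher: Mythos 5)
Your proposal is correct and takes essentially the same route as the paper's proof: the same contraction scheme in the same set $\Ens$ on $[0,\frac{1}{b}]$, with the identity $\int_0^{1/b}(1-bs)^{-\frac{4-N\alpha}{2}}\,ds=\frac{2}{(N\alpha-2)b}$ replacing $f(T)$, the same choices of $\eta$ and $K$ (up to harmless constants), and largeness of $b$ simultaneously enforcing the self-mapping, contraction, and pointwise lower-bound conditions, exactly as in~\eqref{fNLS10b1}--\eqref{fNLS13b1}. Your explicit Bochner-integral argument for continuity up to the endpoint $t=\frac{1}{b}$ merely spells out what the paper obtains from the same integrability of the singular coefficient.
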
 

\begin{proof} 
The proof is similar to the proof of Proposition~\ref{eNLS1}.
The difference is that, instead of assuming $T$ small, we now make $b>0$ large in order to apply the contraction principle. 
We  let $b, \eta ,K>0$ and we define the set $\Ens $ by
\begin{equation} 
\begin{split} 
\Ens = \{ v\in C([0, {\textstyle {\frac {1} {b}}}  ], \Spa ); & \, \| v \| _{ L^\infty (0, \frac {1} {b}), \Spa ) } \le K \text{ and } \\  \eta \langle x \rangle ^\Imqd   |v(t,x)| \ge 1 & \text{ for }  0\le t\le{\textstyle  \frac {1} {b}}, x\in \R^N \}
\end{split} 
\end{equation} 
so that $\Ens$ equipped with the distance $\dist (u, v) =  \| u-v \|_{L^\infty ((0, \frac {1} {b}), \Spa )}$ is a complete metric space. 
Given  $v\in \Ens$ and $\DIb \in \Spa$, we consider $\Phi  _{  v } (t)$ and $\Psi   _{ \DIb, v } (t)$ defined by~\eqref{fMap1} and~\eqref{fMap2} for $0\le  t < \frac {1} {b}$. 
Arguing as in the proof of Proposition~\ref{eNLS1}, we see that
\begin{equation} \label{fSp1b:0} 
\| \,  |v|^\alpha v \| _{ L^\infty (0, { {\frac {1} {b}}}), \Spa ) } \le  C  (1+ \eta K)^{2 \Imqh}  K^{\alpha +1} 
\end{equation} 
 that  $\Phi  _{ v }\in C([0, \frac {1} {b}], \Spa )$ and that
\begin{gather} 
\| \Phi  _{  v } \| _{ L^\infty (0,  \frac {1} {b}), \Spa ) } \le C  |\lambda | {\textstyle  \frac {1} {b}   (  1+\frac {1} {b}  )^{\Imqh}   }  (1+ \eta K)^{2 \Imqh}  K^{\alpha +1}  \label{fNLS4b1} \\
\| \Psi  _{\DIb,  v } \| _{ L^\infty (0, { {\frac {1} {b}}}), \Spa ) } \le
C {\textstyle (  1+\frac {1} {b}  )^{\Imqh}   \bigl(  \|\DIb \|_\Spa +   |\lambda | \frac {1} {b}  
 (1+ \eta K)^{2 \Imqh}  K^{\alpha +1} \bigr)}  \label{fNLS5b1}  \\
\| \Phi  _{  v } - \Phi _w \| _{ L^\infty (0, { {\frac {1} {b}}}), \Spa ) }  \le
C  |\lambda |  {\textstyle \frac {1} {b}   (  1+\frac {1} {b}  )^{\Imqh}   }
 (1+ \eta K)^{2 \Imqh +1} K^\alpha    \dist (v, w). \label{fNLS6b1} 
\end{gather} 
(In the last three inequalities, we use the identity $\int _0^{\frac {1} {b}} (1- b s)^{-\frac {4 - N\alpha } {2}} ds = \frac {2} {(N\alpha -2)b}  $.)
Next, it follows from~\eqref{eLE3:12} and~\eqref{fNLS4b1}  that
\begin{equation} \label{fNLS7b1} 
\begin{split} 
 \langle  x \rangle ^\Imqd  |\Psi  _{ \DIb, v }(t,x)|  \ge & \inf _{ x\in \R^N  } \langle x \rangle ^\Imqd  |\DIb (x)| - C t (1+t)^{\Imqh }   \| \DIb  \|_\Spa -  \| \Phi _v (t) \|_\Spa \\  \ge 
 \inf _{ x\in \R^N  } \langle x \rangle ^\Imqd &  |\DIb (x)|  - C  {\textstyle  \frac {1} {b}   (  1+\frac {1} {b}  )^{\Imqh}}  (  \| \DIb   \|_\Spa +   |\lambda |  (1+ \eta K)^{2 \Imqh}  K^{\alpha +1} ).
\end{split} 
\end{equation} 
We now argue as follows.
We denote by $ \widetilde{C} $ the supremum of the constants $C$ in~\eqref{fNLS4b1}--\eqref{fNLS7b1}.  
We consider $\DIb \in \Spa$ such that $\inf  _{ x\in \R^N  }  \langle  x \rangle ^\Imqd  |\DIb (x) | >0$.
(Such $\DIb$ exist, , see Remark~\ref{eRem1}~\eqref{eRem1:3}), we let $\eta$ be defined by~\eqref{fNLS8} and we set
\begin{equation} \label{fNLS9b1} 
K = 4  \widetilde{C}  \| \DIb \|_\Spa. 
\end{equation} 
It follows in particular that $v(t) \equiv \DIb$ belongs to $\Ens$, so that $\Ens \not = \emptyset$.
We consider $b>0$ sufficiently large so that 
\begin{gather} 
 {\textstyle (  1+\frac {1} {b}  )^{\Imqh } } \le 2 \label{fNLS10b1} \\
 |\lambda | {\textstyle \frac {1} {b} }    (1+ \eta K ) ^{2\Imqh}    K^{\alpha +1} \le  \| \DIb \|_\Spa \label{fNLS11b1}  \\ 2\eta  \widetilde{C}   {\textstyle  \frac {1} {b}  }  (  \| \DIb   \|_\Spa +  |\lambda |  (1+  \eta K ) ^{2\Imqh}    K^{\alpha +1} ) \le 1 \label{fNLS12b1} \\
4 \widetilde{C}   |\lambda |  {\textstyle \frac {1} {b}  }
 (1+ \eta K)^{2 \Imqh +1} K^\alpha    \le 1 . \label{fNLS13b1} 
 \end{gather} 
Inequalities~\eqref{fNLS5b1}, \eqref{fNLS10b1}, \eqref{fNLS11b1} and~\eqref{fNLS9b1} imply that $ \|\Psi  _{ \DIb, v }\| _{ L^\infty (0, \frac {1} {b}), \Spa ) } \le K$. Moreover, \eqref{fNLS7b1}, \eqref{fNLS8}, \eqref{fNLS10b1} and~\eqref{fNLS12b1} imply that $ \eta \langle x \rangle ^\Imqd  |\Psi  _{ \DIb, v }(t,x)| \ge 1$ for $0\le t\le \frac {1} {b}$ and $x\in \R^N $. 
Thus we see that $\Psi  _{ \DIb, v } \in  \Ens$ for all $v\in \Ens$. 
In addition, it follows from~\eqref{fNLS6b1}, \eqref{fNLS10b1} and~\eqref{fNLS13b1} that the map $v\mapsto \Psi  _{ \DIb, v }$ is a strict contraction $\Ens \to \Ens$. Therefore, it has a fixed point, which is a solution of~\eqref{fNLS3} on $[0, \frac {1} {b}]$. 
\end{proof} 

\begin{rem} 
One can solve under the same conditions the problem with final value $\psi $ at time $\frac {1} {b}$, i.e. 
\begin{equation} \label{fNLS3b1} 
v(t)= e^{i(t -\frac {1} {b}) \Delta } \psi  + i \lambda \int _t^{\frac {1} {b}}  (1- b s)^{-\frac {4 - N\alpha } {2}} e^{i(t-s) \Delta }  |v|^\alpha v \, ds
\end{equation} 
\end{rem} 

\begin{proof} [Proof of Theorem~$\ref{eThm2}$]

Given $\DIb \in \Spa$ satisfying~\eqref{eThm1:1}, let $b>0$ be sufficiently large so that there exists a solution $v\in  C([0, \frac {1} {b}], \Spa) $ of~\eqref{fNLS3}, by Proposition~\ref{eNLS1b1}.
Let  $u$ be defined by~\eqref{fNLS1:0} for $0\le t<\infty $ and $x\in \R^N $. 
It follows from elementary calculations that 
\begin{equation*} 
u\in C([0, \infty ), \Sig ) \cap L^\infty ((0,\infty )\times \R^N ) 
\end{equation*} 
is a solution of~\eqref{NLS1:i} on $[0, \infty )$. 
Moreover, $u(0) = e^{i\frac {b |x|^2} {4}} \DIb $, and 
$e^{-it\Delta } u(t) \to u^+$ in $\Sig$ as $t\to \infty $, where $u^+= e^{i\frac {b |x|^2} {4}} e^{-\frac {i} {b} \Delta } v(\frac {1} {b})$. See e.g.~\cite[Proposition~3.14]{CWrapdec}. 
In addition, since $\sup  _{ 0\le t\le 1/b}  \| v(t)\| _{ L^\infty  }< \infty $, it follows from~\eqref{fNLS1:0} that   $\sup  _{ t\ge 0 } (1+t)^{\frac {N} {2}}  \| u(t) \| _{ L^\infty  }<\infty $.
Finally, uniqueness in  $C([0, \frac {1} {b}], \Sig ) \cap L^\infty ((0, \frac {1} {b}) \times \R^N )$ is immediate (because of the $L^\infty $ bound), and the proof is complete.
\end{proof} 

\begin{rem} \label{eRem3} 
Note that the solution $u$ of~\eqref{NLS1:i} in Theorem~\ref{eThm2} is obtained by applying the pseudo-conformal transformation to the solution $v$ of~\eqref{fNLS3} constructed in Proposition~\ref{eNLS1b1}. It follows that it has stronger regularity properties than stated in Theorem~\ref{eThm2}. Indeed, it follows easily from formula~\eqref{fNLS1:0} that the map $t\mapsto  e^{- i \frac {b  |x|^2} {4 (1+ bt)}} u(t,x)$ is continuous  $[0,\infty ) \to \Spa$.
In addition, the scattering state $u^+$ satisfies $e^{- i \frac {b  |x|^2} {4 }} u^+ \in \Spa$.
\end{rem} 

\appendix 
\section{Some elementary estimates} \label{sElem} 

In this section, we collect a point-wise estimate (Lemma~\ref{eA1}) and an interpolation estimate (Lemma~\ref{eA2}) which we use in this paper.
For the proof of Lemma~\ref{eA1}, we will use the following observation.

\begin{rem} \label{eRemDr} 
Let $f:\R \to \R$ and $g: \R^N \to \R $. Given a multi-index $\alpha $ with $ |\alpha |\ge 1$, $D^\alpha  f( g(x) )$ is a sum of terms of the form $f^{(\Inzu)} ( g(x) ) \prod  _{ \Inzd =1 }^\Inzu D^{\beta _\Inzd } g(x) $ where $1\le \Inzu \le  |\alpha |$, and the $\beta _\Inzd $ are multi-indices such that $ |\beta _\Inzd |\ge 1$ and $ |\beta _1|+  \cdots +  |\beta _\Inzu | =  |\alpha |$.
\end{rem} 

\begin{lem} \label{eA1} 
Given $\eta \in \R$ and a multi-index $\alpha $, there exists a constant $C$ such that
\begin{equation}  \label{fSob5} 
 | D^\alpha (\langle x\rangle ^\eta u) | \le C \sum_{ \Inzt =0 }^{ |\alpha |}  \langle x\rangle ^{ \eta -  |\alpha |+\Inzt } \sum_{  |\beta |=\Inzt }  |D^\beta u|
\end{equation} 
for all $u\in C^{ |\alpha |} (\R^N , \C)$.
\end{lem}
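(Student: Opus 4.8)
The plan is to combine Leibniz's rule with the elementary weight bound $|D^\gamma \langle x\rangle^\eta| \le C \langle x\rangle^{\eta - |\gamma|}$, which is the only real content of the lemma.

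First I would prove this weight bound. For $|\gamma| \ge 1$ I would write $\langle x\rangle^\eta = f(g(x))$ with $f(s) = s^{\eta/2}$ and $g(x) = 1 + |x|^2 = \langle x\rangle^2$, and apply Remark~\ref{eRemDr}. This expresses $D^\gamma \langle x\rangle^\eta$ as a sum of terms $f^{(m)}(g(x)) \prod_{k=1}^m D^{\beta_k} g(x)$ with $1\le m \le |\gamma|$, each $|\beta_k|\ge 1$, and $\sum_k |\beta_k| = |\gamma|$. Since $f^{(m)}(s) = c_m s^{\eta/2 - m}$, we have $|f^{(m)}(g(x))| = |c_m|\,\langle x\rangle^{\eta - 2m}$. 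Because $g$ is a polynomial of degree two, $D^{\beta_k} g$ vanishes when $|\beta_k|\ge 3$, is a constant when $|\beta_k| = 2$, and equals $2x_j$ when $\beta_k = e_j$; in every case $|D^{\beta_k} g(x)| \le C\langle x\rangle^{2 - |\beta_k|}$. Multiplying these bounds, the exponents combine to
\[
\eta - 2m + \sum_{k=1}^m (2 - |\beta_k|) = \eta - \sum_{k=1}^m |\beta_k| = \eta - |\gamma|,
\]
so each term is $\le C\langle x\rangle^{\eta - |\gamma|}$, and summing gives $|D^\gamma \langle x\rangle^\eta| \le C\langle x\rangle^{\eta - |\gamma|}$. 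The case $|\gamma| = 0$ is trivial.

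Next I would apply Leibniz's rule,
\[
D^\alpha(\langle x\rangle^\eta u) = \sum_{\gamma + \rho = \alpha} \binom{\alpha}{\gamma} D^\gamma(\langle x\rangle^\eta)\, D^\rho u,
\]
take absolute values, and insert the weight bound to get $|D^\alpha(\langle x\rangle^\eta u)| \le C \sum_{\gamma + \rho = \alpha} \langle x\rangle^{\eta - |\gamma|} |D^\rho u|$. Since $|\gamma| = |\alpha| - |\rho|$ on each summand, grouping the terms according to $n = |\rho|$ (and renaming $\rho$ to $\beta$) turns this into $C \sum_{n=0}^{|\alpha|} \langle x\rangle^{\eta - |\alpha| + n} \sum_{|\beta| = n} |D^\beta u|$, which is exactly~\eqref{fSob5}.

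The only step with any content is the weight estimate, and even there the difficulty is merely bookkeeping: one must check that the $\langle x\rangle$-powers telescope correctly. This is forced by homogeneity, since $f^{(m)}$ lowers the exponent by $2m$ while each factor $D^{\beta_k} g$ — a derivative of a degree-two polynomial — contributes $\langle x\rangle^{2 - |\beta_k|}$, and these exactly balance to $-|\gamma|$. Everything else is routine.
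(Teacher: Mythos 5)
Your proof is correct and takes essentially the same route as the paper: the composition bound from Remark~\ref{eRemDr} to control derivatives of the weight, followed by Leibniz's rule. The only (harmless) difference is that you get $|D^\gamma \langle x\rangle^\eta| \le C\langle x\rangle^{\eta-|\gamma|}$ in a single application of Remark~\ref{eRemDr} with $f(s)=s^{\eta/2}$, $g(x)=1+|x|^2$, whereas the paper first proves the case $\eta=1$ (estimate~\eqref{fSob3}) and then composes $t\mapsto t^\eta$ with $\langle x\rangle$ to obtain~\eqref{fSob4}.
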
 

\begin{proof} 
We first claim that
\begin{equation}  \label{fSob3} 
 | D^\alpha \langle x\rangle  | \le C_\alpha \langle x\rangle ^{1-  |\alpha |} .
\end{equation} 
Indeed, we apply Remark~\ref{eRemDr} with $f(t)= (1+t)^{\frac {1} {2}}$ and $g(x)=  |x|^2$, so that $\langle x\rangle= f(g(x))$.
We note that
\begin{equation*} 
f^{(\Inzq)} (t)= c_\Inzq (1+t)^{\frac {1} {2}- \Inzq}
\end{equation*} 
and that 
$\partial _j g= 2x_j$, $\partial  _{ i j }g= 2\delta  _{ ij }$, $D^\beta g=0$ if $ |\beta |\ge 3 $.
In particular, 
\begin{equation*} 
 | D^\beta  g(x)| \le c_\alpha  |x|^{2-  |\beta |}
\end{equation*} 
so that the generic term in the development of $D^\alpha \langle x\rangle $ given by Remark~\ref{eRemDr} can be estimated by $\langle x\rangle ^{1-2\Inzq} \prod  _{ \Intd =1 }^\Inzq  |x|^{2-  |\beta _\Intd |} $.
Since $ |\beta _1|+  \cdots +  |\beta _\Intd| =  |\alpha |$, we obtain~\eqref{fSob3}. 
We now consider a real number $\eta $ and we apply Remark~\ref{eRemDr} with $f(t)= t^\eta $ and $g(x)=  \langle x\rangle $. Using~\eqref{fSob3}, we deduce easily that
\begin{equation}  \label{fSob4} 
 | D^\alpha \langle x\rangle ^\eta | \le C_{\eta, \alpha }  \langle x\rangle ^{\eta -  |\alpha |}
\end{equation} 
and~\eqref{fSob5} follows by applying Leibniz's rule.
\end{proof} 

\begin{lem} \label{eA2} 
Given $\Inzc \in \N$ and $\Inth \in \R$, there exists a constant $C$ such that
\begin{equation} \label{fSob6} 
 \sup  _{  |\beta  |= \Inzc +1 }\| \langle x\rangle ^\Inth D^\beta  u \| _{ L^\infty  } \le C ( \sup  _{  |\beta  |= \Inzc  }\| \langle x\rangle ^\Inth D^\beta  u \| _{ L^\infty  } +  \sup  _{  |\beta  |= \Inzc +2 }\| \langle x\rangle ^\Inth D^\beta  u \| _{ L^\infty  })
\end{equation} 
for all  $u \in  C^{j+2} (\R^N )$.
\end{lem}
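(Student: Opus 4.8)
The plan is to reduce this weighted multivariate interpolation estimate to a one-dimensional, second-order Taylor (finite-difference) argument, exploiting the fact that the weight $\langle x\rangle^\nu$ varies slowly on unit-size neighbourhoods. First I would fix a point $x_0\in\R^N$ and a multi-index $\beta$ with $|\beta|=j+1$, and write $\beta=\beta'+e_i$, where $|\beta'|=j$ and $e_i$ is a coordinate direction along which $\beta$ has a positive entry. Setting $g=D^{\beta'}u$, which is $C^2$ because $u\in C^{j+2}$ and $|\beta'|=j$, the quantity $D^\beta u=\partial_i g$ is a first derivative of $g$, while $D^{\beta'+2e_i}u=\partial_i^2 g$ is a second derivative; these are exactly the orders $j$, $j+1$, $j+2$ appearing in the statement.

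Next I would apply second-order Taylor expansions of $g$ along the segment $s\mapsto x_0+se_i$, $|s|\le 1$. Writing $g(x_0\pm e_i)=g(x_0)\pm\partial_i g(x_0)+\tfrac12\partial_i^2 g(\xi_\pm)$ for suitable intermediate points $\xi_\pm$ on the segment and subtracting yields the finite-difference identity
\[
\partial_i g(x_0)=\tfrac12\bigl(g(x_0+e_i)-g(x_0-e_i)\bigr)-\tfrac14\bigl(\partial_i^2 g(\xi_+)-\partial_i^2 g(\xi_-)\bigr).
\]
Taking absolute values gives
\[
|D^\beta u(x_0)|\le \|D^{\beta'}u\|_{L^\infty(Q)}+\tfrac12\|D^{\beta'+2e_i}u\|_{L^\infty(Q)},
\]
where $Q=\{x_0+se_i:|s|\le 1\}$ is the unit segment through $x_0$. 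The choice of unit step $h=1$ is deliberate: it keeps the comparison constant dimension-free and avoids optimizing $h$, which would only be needed for the sharp multiplicative Landau--Kolmogorov form, not for the additive estimate stated here.

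The one point requiring care is the interaction between the weight and this localization. I would multiply the last display by $\langle x_0\rangle^\nu$ and use that $\langle x\rangle$ and $\langle x_0\rangle$ are comparable on $Q$: since $\langle\cdot\rangle$ is $1$-Lipschitz and $\langle x\rangle\ge 1$, one checks that $\tfrac12\langle x_0\rangle\le\langle x\rangle\le 2\langle x_0\rangle$ whenever $|x-x_0|\le 1$, hence $\langle x_0\rangle^\nu\le 2^{|\nu|}\langle x\rangle^\nu$ on $Q$, with a constant depending only on $\nu$. Therefore
\[
\langle x_0\rangle^\nu|D^\beta u(x_0)|\le 2^{|\nu|}\Bigl(\sup_{|\gamma|=j}\|\langle x\rangle^\nu D^\gamma u\|_{L^\infty}+\tfrac12\sup_{|\gamma|=j+2}\|\langle x\rangle^\nu D^\gamma u\|_{L^\infty}\Bigr),
\]
since each point of $Q$ lies within distance $1$ of $x_0$. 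Taking the supremum over $x_0\in\R^N$ and over all $\beta$ with $|\beta|=j+1$ then yields~\eqref{fSob6} with $C=2^{|\nu|}$. I do not anticipate any genuine obstacle: the whole argument is the classical additive interpolation inequality applied direction-by-direction, and the weight is absorbed entirely by the slowly-varying comparison $\langle x_0\rangle^\nu\approx\langle x\rangle^\nu$ on unit balls.
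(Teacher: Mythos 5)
Your proof is correct, and at bottom it is the same argument as the paper's: a Taylor-type expansion of $D^{\beta'}u$ over a unit step, followed by absorption of the weight via the comparability of $\langle x\rangle^\nu$ on unit neighbourhoods (a step you make explicit with $\tfrac12\langle x_0\rangle\le\langle x\rangle\le2\langle x_0\rangle$, where the paper only says the estimate ``easily follows''). The one real difference is how the first-order term is extracted: the paper reduces to the gradient estimate~\eqref{fSob7} and expands one-sidedly along the unit vector $\nabla\overline{u}(x)/|\nabla u(x)|$, chosen precisely so that the first-order term equals $|\nabla u(x)|$, whereas you use a symmetric second difference $g(x_0+e_i)-g(x_0-e_i)$ in each coordinate direction, which cancels the zeroth-order terms and needs no special choice of direction. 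Your variant is if anything more robust: the paper's choice of $y=x+\nabla\overline{u}(x)/|\nabla u(x)|$ tacitly treats this as a real direction, which for the complex-valued functions to which the lemma is actually applied requires working with real and imaginary parts (or a unimodular rotation), while your coordinate-wise scheme applies verbatim. The only repair needed in your write-up is at the same point: since $g=D^{\beta'}u$ is complex-valued, the Lagrange remainders $\tfrac12\partial_i^2 g(\xi_\pm)$ are not available (the mean value theorem fails for complex-valued functions); replace them by the integral form $\int_0^1(1-s)\,\partial_i^2 g(x_0\pm se_i)\,ds$, whose modulus is still bounded by $\tfrac12\sup_Q|\partial_i^2 g|$, so your displayed inequality and the final constant $C=2^{|\nu|}$ are unchanged. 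This is a one-line cosmetic fix, not a gap.
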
 

\begin{proof} 
It suffices to show that
\begin{equation}  \label{fSob7}
\| \langle x\rangle ^\Inth \nabla u \| _{ L^\infty  } \le C ( \| \langle x\rangle ^\Inth  u \| _{ L^\infty  } +  \sup  _{  |\alpha |= 2 }\| \langle x\rangle ^\Inth D^\alpha u \| _{ L^\infty  }) .
\end{equation} 
Given $x, y\in \R^N $, we have 
\begin{equation*}
 u(y) -   u (x) =  \int _0^1 \frac {d} {ds} u (x+ s(y-x)) \, ds= \int _0^1 (y-x)\cdot \nabla u  (x+ s(y-x))\, ds
\end{equation*} 
and
\begin{equation*}
\begin{split} 
 \nabla u  (x+ s(y-x)) - \nabla u  (x)  & =  \int _0^1 \frac {d} {d\sigma } \nabla u  (x+ \sigma s(y-x))\, d\sigma  \\
 & = \int _0^1  s(y-x) \cdot \nabla ^2 u  (x+ \sigma s(y-x))\, d\sigma 
\end{split} 
\end{equation*} 
which imply
\begin{equation*}
\begin{split} 
u(y)- u(x)=&  \int _0^1 (y-x)\cdot \nabla u  (x )\, ds \\ &+ \int _0^1 \int _0^1 (y-x)\cdot [ s(y-x) \cdot \nabla ^2 u  (x+ \sigma s(y-x)) ]\, d\sigma ds .
\end{split} 
\end{equation*} 
If $\nabla u  (x) \not = 0$, we let $y= x+ \frac {\nabla  \overline{u}   (x)} { |\nabla u  (x) |}$, and we obtain
\begin{equation*}
\begin{split} 
u(y)- u(x)=&   |\nabla u  (x ) | \\ &+ \int _0^1 \int _0^1 (y-x)\cdot [ s(y-x) \cdot \nabla ^2 u  (x+ \sigma s(y-x)) ]\, d\sigma ds
\end{split} 
\end{equation*} 
so that
\begin{equation*}
  |\nabla u  (x ) |  \le  2 \sup  _{  |y-x|\le 1 }  |u(y)| +  \sup  _{  |y-x|\le 1 }\sup  _{  |\beta |=2 }  |D^\beta u(y)| .
\end{equation*} 
Estimate~\eqref{fSob7} easily follows.
\end{proof}

\end{document}